\title{Kernel Conditional Moment Test via Maximum Moment Restriction}
\author{ {\bf Krikamol Muandet} \\
MPI for Intelligent Systems \\
T\"ubingen, Germany \\
\And
{\bf Wittawat Jitkrittum\thanks{Now with Google Research.}}  \\
MPI for Intelligent Systems \\
T\"ubingen, Germany \\
\And
{\bf Jonas M. K\"ubler}   \\
MPI for Intelligent Systems \\
T\"ubingen, Germany
}
\newcommand{\ie}{\emph{i.e.}} 
\newcommand{\eg}{\emph{e.g.}}
\pgfplotsset{compat=newest}
\newtheorem{theorem}{Theorem}[section]
\newtheorem{definition}{Definition}[section]
\newtheorem{lemma}{Lemma}[section]
\newtheorem{corollary}{Corollary}[section]
\newtheorem{proposition}{Proposition}[section]
\newcommand{\inx}{\ensuremath{\mathcal{X}}}
\newcommand{\inz}{\ensuremath{\mathcal{Z}}}
\newcommand{\hbspf}{\ensuremath{\mathcal{F}}}
\newcommand{\rr}{\mathbb{R}} 		         
\newcommand{\ep}{\mathbb{E}}                     
\newcommand{\mut}{\bm{\mu}}
\newcommand{\muth}{\widehat{\mut}}
\newcommand{\bxi}{\bm{\xi}}
\newcommand{\bpsi}{\bm{\psi}}
\newcommand{\dd}{\, \mathrm{d}}
\newcommand{\CMR}{\ensuremath{\mathscr{M}}}
\newcommand{\MM}{\mathbb{M}}
\newcommand{\MH}{\widehat{\MM}}
\newcommand{\x}{\ensuremath{\mathbf{x}}}
\begin{document}
 
\maketitle

\begin{abstract}
    We propose a new family of specification tests called kernel conditional moment (KCM) tests. 
    Our tests are built on a novel representation of conditional moment restrictions in a reproducing kernel Hilbert space (RKHS) called conditional moment embedding (CMME).
    After transforming the conditional moment restrictions into a continuum of unconditional counterparts, 
    the test statistic is defined as the maximum moment restriction (MMR) within the unit ball of the RKHS. 
    We show that the MMR not only fully characterizes the original conditional moment restrictions, leading to consistency in both hypothesis testing and parameter estimation, but also has an analytic expression that is easy to compute as well as closed-form asymptotic distributions. 
    Our empirical studies show that the KCM test has a promising finite-sample performance compared to existing tests.
\end{abstract}

\section{INTRODUCTION}

Many problems in causal inference, economics, and finance are often formulated as a conditional moment restriction (CMR): for correctly specified models, the conditional mean of certain functions of data is almost surely equal to zero \citep{Newey93:CMR,AI03:CMR}.
Rational expectation models---widely used in many fields of macroeconomics---specify how economic agents exploit available information to form their expectations in terms of conditional moments \citep{Muth61:Rational}.
Recent advances in causal machine learning also rely on the CMR including a generalized random forest (GRF)
\citep{Athey19:GRF}, orthogonal random forest (ORF) \citep{Oprescu19:ORF}, double machine learning (DML) \citep{Chernozhukov18:DML}, and nonparametric instrumental variable regression \citep{Bennett19:DeepGMM,Lewis18:AGMM} among others; see also \citet{Hartford17:DIV,Singh19:KIV,Muandet19:DualIV} and references therein.

\begin{figure}[t!]
    \centering
    \resizebox{\columnwidth}{!}{
        \begin{tikzpicture}[scale=0.7]
      \begin{scope}
      \begin{axis}[axis y line=middle, axis line style = ultra thick, ymin=-1, ymax=1, xmin=-2, xmax=2, yticklabels={,,}, xticklabels={,,}, title={\Huge$\ep[\bpsi(Z;\theta)|X]$}]

        \addplot[dashed,color=black,fill=yellow!50,opacity=0.5,smooth, line width=4pt, domain=-2:2] plot (\x,{-0.2+exp(-\x*\x*0.15/0.4)});
        \node at (-0.35,0.5) {\huge$P_X$};

        \addplot[color=red,smooth,domain=-2:2,line width=3pt] plot (\x,{-0.65+1/(1+exp(3*\x))}) node[below,pos=1,xshift=-1em] {\huge $\theta_2$};
        \addplot[color=blue,smooth,domain=-2:2,line width=3pt] plot (\x,{-0.4+1/(1+exp(-\x))}) node[above,pos=1,xshift=-1em] {\huge $\theta_1$};;
        \addplot[color=green,smooth,domain=-2:2,line width=3pt] plot (\x,{0}) node[below,pos=1,xshift=-1em] {\huge $\theta_0$};;
      \end{axis}
      \end{scope}

      \begin{scope}[xshift=11cm]      
	\begin{axis}[axis y line=middle, axis x line=middle, axis line style = ultra thick, 
			ymin=-2, ymax=2, xmin=-2, xmax=2, yticklabels={,,}, xticklabels={,,}, title={\Huge RKHS $\mathcal{F}$}]	
      	\coordinate (P1) at (2,0.65);
      	\coordinate (P2) at (4.2,0.55);
      	\coordinate (Q1) at (2,0);   
      	\coordinate (Q2) at (5,0);
      	\coordinate (R1) at (2,-0.7);   
      	\coordinate (R2) at (6,-0.75);

	\node[label={45:{\huge $\mu_{\theta_0}$}},circle,fill=green,inner sep=5pt] at (axis cs:0,0) {};
	\node[label={45:{\huge $\mu_{\theta_1}$}},circle,fill=blue,inner sep=5pt] at (axis cs:-1.3,0.7) {};
	\node[label={45:{\huge $\mu_{\theta_2}$}},circle,fill=red,inner sep=5pt] at (axis cs:1.1,-1.5) {};
      
	\end{axis}
	\end{scope}
	\draw[dashed, line width=2pt, ->] (7.8,3) -- node[above, text width=4cm, align=center]
    {\bfseries CMME} (10.5,3);
    \end{tikzpicture}}
    \caption{\textbf{Conditional moment embedding (CMME)}: The conditional moments $\ep[\bm{\psi}(Z;\theta)|X]$ for different parameters $\theta$ are \emph{uniquely} ($P_X$-almost surely) embedded into the RKHS.
    The RKHS norm of $\mut_\theta$ measures to what extent these restrictions are violated and hence is used as a test statistic for conditional moment tests.}
    \label{fig:cmme}
\end{figure}
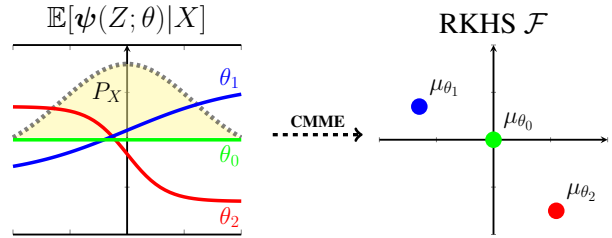

Checking the validity of these moment restrictions is the first and foremost step to ensure that a model is correctly specified which constitutes a fundamental assumption for its estimation and inference.
A model misspecification often creates biases to parameter estimates, inconsistency of standard errors, and invalid asymptotic distributions that hinder our subsequent inference based on the model.
An overidentifying restriction test in the generalized method of moments (GMM) framework is one of the standard approaches to test a \emph{finite} number of \emph{unconditional} moment conditions \citep{Hansen82:GMM,Hall05:GMM}.
The $J$-test is an example of such tests \citep{Sargan58:OITest,Hansen82:GMM}, and numerous tests have been developed in econometrics to deal with various sources of misspecification; see, \eg, \citet{Bierens17:Econometric} for a review.
This paper focuses on an important class of CMR-based specification tests known as the conditional moment (CM) tests \citep{Newey85:Specification,Tauchen85:CMT} which have a long history in econometrics \citep{Hausman78:Specification,White81:Misspecification,Bierens17:Econometric}.

Testing \emph{conditional} moment restrictions becomes more challenging as an \emph{infinite} number of equivalent unconditional moment restrictions (UMR) must be examined simultaneously (cf. Section \ref{sec:mmr}).
At first, \citet{Newey85:Specification} and \citet{Tauchen85:CMT} proposed to perform the overidentifying restriction test on a finite subset of the UMR. 
Unfortunately, the CM tests that rely only on a finite number of moment conditions cannot be consistent against all alternatives.
Additional assumptions such as the global identification of selected moment conditions and sample-size dependent moment conditions are required to guarantee consistency \citep{Jong96:Dependence,Donald03:Likelihood}.
To overcome this limitation, \citet{Bierens82:ConsistentCM} introduced the first consistent CM tests---known as integrated conditional moment (ICM) tests---by checking \emph{all} moment conditions simultaneously \citep{Bierens97:Asymptotic}.
However, the ICM test depends on parametric weighting functions and nuisance parameters that limit its practical use.
An alternative class of consistent CM tests, known as smooth tests, employ nonparametric kernel estimation \citep{Zheng96:NonparamTest, LiWang98:NonparamTest} which also forms a basis for the generalized empirical likelihood approach \citep{Delgado06:Consistent,Tripathi03:CMRTest}.
However, they have non-trivial power only against local alternatives that approach the null at a slower rate than $1/\sqrt{n}$, and are susceptible to the curse of dimensionality (cf. Section \ref{sec:related-work} for the discussion).

Inspired by a surge of kernel-based tests \citep{gretton2012kernel,Chwialkowski16:KGFT,Liu16:KSD}, we propose to embed the CMR in a reproducing kernel Hilbert space (RKHS). 
By transforming CMR into a continuum of UMR in RKHS, the test statistic is defined as the maximum moment restriction (MMR) within the unit ball of the RKHS (cf. Section \ref{sec:mmr}).
We then show that the MMR corresponds to the RKHS norm of a Hilbert space embedding of conditional moments. 
\emph{Not only can the MMR capture all information about the original CMR, but it also has a closed-form expression that enables the practical ease of implementation} (cf. Theorems \ref{thm:identifiability} and \ref{thm:close-form}).
The MMR allows us to develop a class of consistent CM tests that we call kernel conditional moment (KCM) tests (cf. Section \ref{sec:kcm}).
Furthermore, it considerably simplifies the parameter estimation problems based on the CMR.
Our framework has relationships to existing methods in econometrics and machine learning (cf. Section \ref{sec:related-work}).
To the best of our knowledge, the Hilbert space embedding of conditional moment restrictions has not appeared elsewhere in the literature.\footnote{\citet{Carrasco00:Continuum} and their follow-up work are the most relevant works from the econometric literature. 
We discuss this connection in Section \ref{sec:related-work}.}


All proofs can be found in Appendix \ref{sec:proofs}. 
The code of our experiments is available at \url{https://github.com/krikamol/kcm-test}.

\section{BACKGROUND}
\label{sec:background}

We introduce the CMR in Section \ref{sec:cmt} and then review the concepts of kernels and RKHS in Section \ref{sec:rkhs}. 
Finally, we discuss the main assumptions in Section \ref{sec:assumption}. 

\subsection{CONDITIONAL MOMENT RESTRICTIONS}
\label{sec:cmt}

Let $Z$ be a random variable taking values in $\mathcal{Z}\subseteq\rr^p$ with distribution $P_Z$, $X$ a subvector of $Z$ taking values in $\inx\subseteq\rr^d$ with distribution $P_X$, and $\Theta\subset\rr^r$ a parameter space.
Following \citet{Newey93:CMR}, we consider models where the only available information about the unknown parameter $\theta_0\in\Theta$ is a set of conditional moment restrictions
\begin{equation}\label{eq:cond-moments}
    \mathscr{M}(X;\theta_0) := \ep[\bpsi(Z;\theta_0)|X] = \mathbf{0}, \quad P_X\text{-a.s.} ,
\end{equation}
\noindent where $\bpsi: \inz\times\Theta\to\rr^q$ is a vector of \emph{generalized residual functions} whose functional forms are known up to the parameter $\theta\in\Theta$.
The expectation is always taken over all random variables that are not conditioned on.
Note that there can be two different models that are \emph{observationally equivalent} on the basis of \eqref{eq:cond-moments} alone although an ideal parameter $\theta_0$ is unique.

Several statistical problems can be formulated as \eqref{eq:cond-moments}. 
In nonparametric regression models, $Z=(X,Y)$ where $Y\in\rr$ is a dependent variable and $\bpsi(Z;\theta) = Y-f(X;\theta)$.
For conditional quantile models, $Z=(X,Y)$ and $\bpsi(Z;\theta) = \mathbbm{1}\{Y < f(X;\theta)\}-\tau$ for the target quantile $\tau\in [0,1]$.
In heterogeneous effect estimation, $Z=(X,T,Y)$ where $T$ is a vector of treatments and $\bpsi(Z;\theta(X)) = (Y-\langle\theta(X),T\rangle)T$. 
For instrumental variable regression, $Z=(X,W,Y)$ where $W$ is an instrumental variable and $\bpsi(Z;\theta) = (Y-f_\theta(X))$ and $\ep[\bpsi(Z;\theta)|W] =  0$ almost surely.
When $Z$ admits the density $p(z;\theta)$, we can define the moment conditions in terms of the \emph{score function} as $\bpsi(Z;\theta) = \nabla_{\theta}\log p(Z;\theta)$ and use it for local maximum likelihood estimation.


\vspace{-10pt}
\paragraph{Conditional moment tests.}
Given an independent sample $(x_i,z_i)_{i=1}^n$ drawn from a distribution that satisfies the conditional moments \eqref{eq:cond-moments} and an estimate $\hat{\theta}$ of $\theta_0$, our goal is to perform specification testing: \emph{Given a function $\bpsi$ and a parameter estimate $\hat{\theta}$, we test the null hypothesis}
\begin{equation}\label{eq:cm-test}
    H_0: \;\ep[\bpsi(Z;\hat{\theta})\,|\,X] = \mathbf{0}, \;\; P_X\text{-a.s.}
\end{equation}
For instance, in the test of functional form of the nonlinear regression model \citep{Hausman78:Specification}, the null hypothesis can be expressed as $H_0: \ep[Y - f(X;\hat{\theta})|X] = 0$ where $\hat{\theta} = \arg\min_{\theta\in\Theta}\,\ep[(Y - f(X;\theta))^2]$. 
In this case, $Z=(Y,X)$ and $\bpsi(Z;\theta) = Y-f(X;\theta)$.
This test allows us to detect misspecifications of the functional form of $f$.

In this work, we assume that $\hat{\theta}$ is obtained independently of the data that is used to test \eqref{eq:cm-test}.
In many cases, however, $\hat{\theta}$ is estimated using this data and hence the test performance is also subject to the estimation error.
A generalization of our framework to those cases will require more involved analyses, and we leave it to future work.


\subsection{REPRODUCING KERNELS}
\label{sec:rkhs}

Let $\inx$ be a non-empty set and $\hbspf$ a Hilbert space consisting of functions on $\inx$ with $\langle \cdot,\cdot\rangle_{\hbspf}$ and $\|\cdot\|_{\hbspf}$ being its inner product and norm, respectively.
The Hilbert space $\hbspf$ is called a reproducing kernel Hilbert space (RKHS) if there exists a symmetric function $k:\inx\times\inx\to\rr$ called the reproducing kernel of $\hbspf$ such that 
\begin{enumerate*}[label=(\roman*)]
   \item $k(x,\cdot)\in\hbspf$ for all $x\in\inx$ and 
   \item $f(x) = \langle f,k(x,\cdot)\rangle_{\hbspf}$ for all $f\in\hbspf$ and $x\in\inx$.
\end{enumerate*}
The latter is called the \emph{reproducing property} of $\hbspf$.
Every positive definite kernel $k$ uniquely determines the RKHS for which $k$ is a reproducing kernel \citep{aronszajn50reproducing}.

Let $\{(\lambda_j,e_j)\}$ be pairs of positive eigenvalues and orthonormal eigenfunctions of $k$, \ie, $\int e_i(x)e_j(x) \dd x = 1$ if $i=j$ and zero otherwise.
By Mercer's theorem \citep[Thm 4.49]{SteChr2008}, the kernel $k$ has the spectral decomposition
\begin{equation}\label{eq:mercer}
    k(x,x') = \sum_j\lambda_j e_j(x)e_j(x'), \quad x,x'\in\inx,
\end{equation}
where the convergence is absolute and uniform. 
As a result, for any $f\in\hbspf$, we have $f(x) = \sum_j f_je_j(x)$ with $\sum_{j}f_j^2/\lambda_j < \infty$ where $f_j = \langle f,e_j\rangle_{\hbspf}$, $\langle f,g\rangle_{\hbspf} = \sum_jf_jg_j/\lambda_j$, and $\|f\|_{\hbspf}^2 = \langle f,f \rangle_{\hbspf} = \sum_j f_j^2/\lambda_j$.

Next, we introduce the notion of integrally strictly positive definite (ISPD) kernels and Bochner's characterization.
\begin{definition}\label{def:integral}
    A kernel $k(x,x')$ is integrally strictly positive definite (ISPD) if for any function $f$ that satisfies $0<\|f\|_2^2<\infty$, $$\int_\inx f(x)k(x,x')f(x')\dd x\dd x' > 0 .$$
\end{definition}

ISPD kernels are an important notion in kernel methods and are closely related to characteristic and universal kernels, see, \eg, \citet{Simon-Gabriel18:KDE}.

The next result characterizes shift-invariant kernels $k(x,x') = \varphi(x-x')$ for some positive definite $\varphi$.

\begin{theorem}[Bochner]\label{thm:bochner}
    A continuous function $\varphi:\rr^d \to \mathbb{C}$ is positive definite if and only if it is the Fourier transform of a finite nonnegative Borel measure $\Lambda$ on $\rr^d$:
    $$\varphi(t) = \frac{1}{(2\pi)^{d/2}} \int_{\rr^d}e^{-i t^\top\omega}\dd\Lambda(\omega)$$ for $t\in\rr^d$.
\end{theorem}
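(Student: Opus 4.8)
The plan is to prove the two directions separately, with sufficiency being routine and the converse requiring a Gaussian-mollifier argument followed by a weak-compactness limit.

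\textbf{Sufficiency.} Suppose $\varphi(t) = (2\pi)^{-d/2}\int_{\rr^d} e^{-it^\top\omega}\dd\Lambda(\omega)$ for a finite nonnegative Borel measure $\Lambda$. Given points $t_1,\dots,t_m\in\rr^d$ and scalars $c_1,\dots,c_m\in\mathbb{C}$, I would substitute this representation into $\sum_{j,k} c_j\overline{c_k}\,\varphi(t_j-t_k)$ and use Fubini (justified since $\Lambda$ is finite and $\varphi$ bounded) to rewrite the sum as $(2\pi)^{-d/2}\int_{\rr^d}\bigl|\sum_j c_j e^{-it_j^\top\omega}\bigr|^2\dd\Lambda(\omega)\ge 0$; hence $\varphi$ is positive definite, and continuity is immediate from dominated convergence.

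\textbf{Converse, setup.} For a continuous positive definite $\varphi$, I would first record the elementary facts $\varphi(0)\ge 0$, $\varphi(-t)=\overline{\varphi(t)}$, and $|\varphi(t)|\le\varphi(0)$, so $\varphi$ is bounded. The next step is to upgrade the defining inequality from finite point masses to a continuous version: for every $\psi\in L^1(\rr^d)$ one has $\int\!\!\int \varphi(s-t)\,\psi(s)\overline{\psi(t)}\dd s\dd t\ge 0$, proved by approximating the double integral by Riemann sums and passing to the limit using continuity and boundedness of $\varphi$. Applying this with $\psi_\omega(s)=e^{is^\top\omega}e^{-\epsilon|s|^2}$ and performing the change of variables $(s,t)\mapsto(s-t,\,s+t)$ factorizes the integral into a (positive) Gaussian integral in the sum variable times a transform in the difference variable, which shows that for every $\epsilon>0$ the function
$$ f_\epsilon(\omega) \;:=\; \frac{1}{(2\pi)^d}\int_{\rr^d}\varphi(u)\,e^{iu^\top\omega}\,e^{-\epsilon|u|^2}\dd u $$
is nonnegative; it is well defined and bounded since $\varphi$ is bounded and the Gaussian factor is integrable.

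\textbf{Converse, inversion and limit.} To see that $f_\epsilon\in L^1$, I would compute $\int f_\epsilon(\omega)e^{-it^\top\omega}e^{-\delta|\omega|^2}\dd\omega$ by Fubini, carrying out the resulting Gaussian integral in $\omega$ to obtain the convolution of $u\mapsto\varphi(u)e^{-\epsilon|u|^2}$ with a centered Gaussian of variance $\propto\delta$, evaluated at $t$. Setting $t=0$ and letting $\delta\downarrow 0$, monotone convergence (valid since $f_\epsilon\ge 0$ and $e^{-\delta|\omega|^2}\uparrow 1$) gives $\int f_\epsilon\dd\omega=\varphi(0)<\infty$; then, knowing $f_\epsilon\in L^1$, dominated convergence in $\delta$ for general $t$ yields the inversion identity $\int f_\epsilon(\omega)e^{-it^\top\omega}\dd\omega=\varphi(t)e^{-\epsilon|t|^2}$. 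Thus $\dd\mu_\epsilon:=f_\epsilon\dd\omega$ is a nonnegative measure of total mass $\varphi(0)$ with Fourier transform $\varphi\cdot e^{-\epsilon|\cdot|^2}$. Finally I would let $\epsilon\downarrow 0$: the family $\{\mu_\epsilon\}$ has constant mass, and the standard tightness estimate bounding $\mu_\epsilon\{|\omega|\ge R\}$ by an average of $\varphi(0)-\mathrm{Re}(\varphi(t)e^{-\epsilon|t|^2})$ over small $t$, together with continuity of $\varphi$ at $0$, gives tightness uniform in $\epsilon$. By Prokhorov, a subsequence converges weakly to a finite nonnegative $\Lambda_0$; testing against the bounded continuous functions $\omega\mapsto e^{-it^\top\omega}$ shows $\widehat{\Lambda_0}=\lim\varphi(t)e^{-\epsilon_k|t|^2}=\varphi$, and rescaling $\Lambda_0$ by $(2\pi)^{d/2}$ matches the stated normalization. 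I expect the main obstacles to be precisely the two limiting steps --- establishing $f_\epsilon\in L^1$ with the clean inversion identity, and proving uniform tightness so that the $\epsilon\downarrow 0$ limit is a genuine finite measure rather than leaking mass to infinity; everything else is bookkeeping with Fubini and Gaussian integrals. (Alternatively one could simply cite a standard reference for Bochner's theorem, e.g.\ in harmonic analysis or scattered-data approximation texts.)
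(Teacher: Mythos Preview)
Your proof outline is a correct and standard route to Bochner's theorem (Gaussian mollification to produce nonnegative approximate densities, Fourier inversion at the mollified level, then tightness and Prokhorov to pass to the limit). However, the paper does not prove this theorem at all: it is stated in Section~\ref{sec:rkhs} as classical background, with no proof in the appendix. Your own parenthetical remark---that one could simply cite a standard reference---is precisely what the paper does. So there is nothing to compare: you have supplied a genuine proof where the paper supplies none.
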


Examples of popular kernels are the Gaussian RBF kernel $k(x,x') = \exp(-\|x-x'\|_2^2/2\sigma^2), \sigma > 0$, Laplacian kernel $k(x,x') = \exp(-\|x-x'\|_1/\sigma), \sigma > 0$, and inverse multiquadric (IMQ) kernel $k(x,x') = (c^2 + \|x-x'\|_2^2)^{-\gamma}$, $c,\gamma > 0$.
See, \eg, \citet[Ch. 4]{SteChr2008} for more examples.

\subsection{MAIN ASSUMPTIONS}
\label{sec:assumption}

Our subsequent analyses rely on these key assumptions.

\begin{enumerate}[label=\textnormal{\textbf{(A\arabic*)}},noitemsep]
    
\item \label{asmp:a1} The random vector $(X,Z)$ forms a strictly stationary process with the probability measure $P_{\mathit{XZ}}$.

\item \label{asmp:a2}
 \emph{Regularity conditions}: (i) the function $\bpsi:\inz\times\Theta\to\rr^q$ where $q < \infty$  is continuous on $\Theta$ for each $z\in\inz$; (ii) $\ep[\bpsi(Z;\theta)|x]$ exists and is finite for every $\theta\in\Theta$ and $x\in\inx$ for which $P_X(x) > 0$; (iii) $\ep[\bpsi(Z;\theta)|x]$ is continuous on $\Theta$ for all $x\in\inx$ for which $P_X(x)>0$.

\item \label{asmp:a3} \emph{Global identification}: there exists a unique $\theta_0\in\Theta$ for which $\ep[\bpsi(Z;\theta_0)|X]=\mathbf{0}$ a.s., and $P(\ep[\bpsi(Z;\theta)|X] = \mathbf{0}) < 1$ for all $\theta\in\Theta,\theta\neq \theta_0$.
 
\item \label{asmp:a4} The kernel $k$ is ISPD, continuous, and bounded, \ie, $\sup_{x\in\inx}\sqrt{k(x,x)} < \infty$.
 

\end{enumerate}

Assumption \ref{asmp:a1} ensures that all expectations of functions of $(X,Z)$ are independent of time.
The regularity conditions \ref{asmp:a2} are standard assumptions \citep[Ch. 3]{Hall05:GMM} which ensure that $\bpsi$ is well-defined, and hold in most models considered in the literature \citep{Hall05:GMM}.
By contrast, \ref{asmp:a3} may not hold, especially in non-linear models. 
A \emph{local} identifiability can be assumed instead by imposing additional constraints on $\Theta$. 
Testing whether the constraints are sufficient can then be done, for example, by examining the Jacobian at some parameter values \citep[pp. 54]{Hall05:GMM}.
Lastly, \ref{asmp:a4} implies that the RKHS $\hbspf$ consists of bounded continuous functions \citep[Sec. 4.3]{SteChr2008} and is expressive enough (cf. Theorem \ref{thm:identifiability}).

\section{MAXIMUM MOMENT RESTRICTION}
\label{sec:mmr}

This section presents the RKHS representation of the CMR.
Let $\mathscr{F}$ be a set of measurable functions on $\inx$.
Then, $\ep_{\mathit{XZ}}[\bpsi(Z;\theta)f(X)] = \ep_X[\ep_{\mathit{Z}}[\bpsi(Z;\theta)f(X)|X]] = \ep_X[\mathscr{M}(X;\theta)f(X)]$ for any $f\in\mathscr{F}$ by the law of iterated expectation.
That is, the CMR in \eqref{eq:cond-moments} implies an infinite set of unconditional moment restrictions
\begin{equation}\label{eq:uncond-moments}
    \ep[\bpsi(Z;\theta_0)f(X)] = \mathbf{0}, \quad \forall f\in\mathscr{F}.
\end{equation}
Equivalently, any $\theta_0\in\Theta$ that satisfies \eqref{eq:uncond-moments} must also satisfy what we call a \emph{maximum moment restriction} (MMR)
\begin{equation}\label{eq:sup-moments}
    \sup_{f\in\mathscr{F}}\; \|\ep[\bpsi(Z;\theta_0)f(X)]\|_2^2 = 0.
\end{equation}
It is known that the implied moment restrictions \eqref{eq:uncond-moments} and \eqref{eq:sup-moments} can be insufficient to globally identify the parameters of interest.
We call $\mathscr{F}$ for which \eqref{eq:sup-moments} implies \eqref{eq:cond-moments} a \emph{sufficient class of instruments}.
In the context of this work, $\mathscr{F}$ must consist of infinitely many instruments for the CM test to be consistent against all alternatives.
However, the $\sup$ operator also makes it hard to optimize \eqref{eq:sup-moments}.
We resolve these issues by choosing $\mathscr{F}$ to be a unit ball in a RKHS, which we show to be a sufficient class of instruments. 
As a result, \eqref{eq:sup-moments} can be solved analytically, the parameters of interest can be consistently estimated, and the resulting CM test is consistent against all fixed alternatives.

Recently, \citet{Lewis18:AGMM} and \citet{Bennett19:DeepGMM} also propose to estimate $\theta_0$ based on \eqref{eq:sup-moments} and $\mathscr{F}$ that is parameterized by deep neural networks. 
While they consider an estimation problem, we focus on hypothesis testing problems. 
Nevertheless, our formulation of CMR can also be used to estimate $\theta_0$ (cf. Section \ref{sec:kmmr} and Appendix \ref{sec:estimation}). 
Note that the algorithms proposed in \citet{Lewis18:AGMM} and \citet{Bennett19:DeepGMM} require solving a minimax game, whereas our approach for estimation is simply a minimization problem.

\subsection{CONDITIONAL MOMENT EMBEDDING}
\label{sec:mmr-rkhs}

To express \eqref{eq:sup-moments} using the RKHS, we first develop a representation of the CMR in a vector-valued RKHS of functions $f:\inx\to\rr^q$ \citep{Alvarez12:KVF}.
Let $\hbspf$ be the RKHS of real-valued functions on $\inx$ with reproducing kernel $k$ and $\hbspf^q$ the product RKHS of functions $f := (f_1,\ldots,f_q)$ where $f_i\in\hbspf$ for all $i$ with an inner product $\langle f,g \rangle_{\hbspf^q} = \sum_{i=1}^q \langle f_i,g_i\rangle_{\hbspf}$ and norm $\|f\|_{\hbspf^q} = \sqrt{\sum_{i=1}^q\|f_i\|^2_{\hbspf}}$. 
For $\theta\in\Theta$, we define an operator $M_\theta$ on $\hbspf^q$ as
\begin{align*}
    M_\theta f := \ep[\bpsi(Z;\theta)^\top f(X)]
    = \sum_{i=1}^q \ep[\psi_i(Z;\theta)f_i(X)],
\end{align*}
where $\psi_i$ denotes the $i$-th component of $\bpsi$. 
This operator takes an instrument $f\in\hbspf^q$ as input and returns the corresponding conditional moment restrictions.

The following lemma shows that $M_\theta$ satisfies the property of the original conditional moment restrictions.
\begin{lemma}\label{lem:optimal}
    For all $f\in\hbspf^q$, $M_{\theta_0} f = 0$.
\end{lemma}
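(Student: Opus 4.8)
The statement is essentially a direct consequence of the law of iterated expectations together with the defining property \eqref{eq:cond-moments} of $\theta_0$, so the plan is short. Since $M_{\theta_0}f = \sum_{i=1}^q \ep[\psi_i(Z;\theta_0)f_i(X)]$ is a finite sum (here $q<\infty$ by \ref{asmp:a2}), it suffices to show that each summand $\ep[\psi_i(Z;\theta_0)f_i(X)]$ vanishes and then add them up.

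First I would check that each summand is a well-defined (finite) quantity, so that the tower property may be invoked. Because $f_i\in\hbspf$, the reproducing property gives $f_i(x)=\langle f_i,k(x,\cdot)\rangle_{\hbspf}$, and Cauchy--Schwarz together with \ref{asmp:a4} yields $|f_i(x)|\le \|f_i\|_{\hbspf}\sqrt{k(x,x)}\le \kappa\,\|f_i\|_{\hbspf}$ with $\kappa:=\sup_{x\in\inx}\sqrt{k(x,x)}<\infty$; hence every $f_i$ is bounded. Combined with the regularity conditions \ref{asmp:a2}, which ensure $\ep[\psi_i(Z;\theta_0)\mid X]$ exists and is finite, this makes $\psi_i(Z;\theta_0)f_i(X)$ integrable and legitimizes conditioning on $X$.

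Next, since $f_i(X)$ is $\sigma(X)$-measurable, the law of iterated expectations gives
\begin{equation*}
    \ep[\psi_i(Z;\theta_0)f_i(X)] = \ep_X\big[\,f_i(X)\,\ep[\psi_i(Z;\theta_0)\mid X]\,\big].
\end{equation*}
By the conditional moment restriction \eqref{eq:cond-moments} applied at $\theta_0$, we have $\ep[\bpsi(Z;\theta_0)\mid X]=\mathbf{0}$, $P_X$-a.s., so in particular the $i$-th component $\ep[\psi_i(Z;\theta_0)\mid X]=0$ holds $P_X$-a.s. Therefore the integrand on the right-hand side is zero $P_X$-almost surely, and the expectation is $0$. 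Summing over $i=1,\dots,q$ gives $M_{\theta_0}f=0$ for every $f\in\hbspf^q$.

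There is no real obstacle here: the only step requiring a small amount of care is the well-definedness/measurability check needed to apply the tower property, which is exactly where boundedness of RKHS functions under \ref{asmp:a4} and the regularity of $\bpsi$ under \ref{asmp:a2} enter; everything else is immediate from \eqref{eq:cond-moments}.
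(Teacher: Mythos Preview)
Your proposal is correct and follows essentially the same approach as the paper: decompose $M_{\theta_0}f$ into the $q$ summands, apply the law of iterated expectations to pull out $f_i(X)$, and use the CMR at $\theta_0$ to conclude each summand vanishes. The only difference is that you spell out the integrability/boundedness check via \ref{asmp:a4} and \ref{asmp:a2} more explicitly than the paper, which simply cites ``the continuity of $f_i$, i.e., by Assumption \ref{asmp:a4}'' at the end.
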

\noindent Moreover, by Assumption \ref{asmp:a2} and \ref{asmp:a4},  $|M_\theta f| \leq \sum_{i=1}^q\|f_i\|_{\hbspf_i}\sqrt{\ep[\psi_i(Z;\theta)\psi_i(Z';\theta)k(X,X')]} < \infty$ where $(X',Z')$ is an independent copy of $(X,Z)$. 
Hence, $M_\theta$ is a bounded linear operator.
By Riesz's representation theorem, there exists a unique element $\mut_{\theta}$ in $\hbspf^q$ such that $M_\theta f = \langle f,\mut_\theta\rangle_{\hbspf^q}$ for all $f\in\hbspf^q$. 
Indeed, by the reproducing property,
\begin{equation*}
    M_\theta f = \sum_{i=1}^q\left\langle f_i,\ep[\xi^i_\theta (X,Z)]\right\rangle_{\hbspf_i} = \left\langle f,\ep[\bm{\xi}_\theta (X,Z)]\right\rangle_{\hbspf^q},
\end{equation*}
where $\bm{\xi}_{\theta}(x,z) := \left(
\psi_1(z;\theta)k(x,\cdot),\ldots,\psi_q(z;\theta)k(x,\cdot) \right)$ is the feature map in $\hbspf^q$ and $\xi^i_\theta$ denotes the $i$-th element of $\bm{\xi}_\theta$.
The equalities above are well-defined since $\bm{\xi}_\theta(x,z)$ is Bochner integrable \citep[Def. A.5.20]{SteChr2008}, \ie, $\ep\|\bm{\xi}_\theta(X,Z)\|_{\hbspf^p} \leq \sqrt{\ep\|\bm{\xi}_\theta(X,Z)\|_{\hbspf^p}^2} = \sqrt{\ep[ \bpsi(Z;\theta)^\top\bpsi(Z;\theta)k(X,X)]} < \infty$.

In other words, $\mut_\theta := \ep[\bm{\xi}_\theta(X,Z)]$ is a \emph{representer} of $M_\theta$ in $\hbspf^q$.
We define $\mut_\theta$ as \emph{conditional moment embedding} (CMME) of  $\ep[\bpsi(Z;\theta)|X]$ in $\hbspf^q$ relative to $P_X$.

\begin{definition}\label{def:cmme}
    For each $\theta\in\Theta$, let $\bm{\xi}_\theta(x,z) := \left(
\psi_1(z;\theta)k(x,\cdot), \ldots,\psi_q(z;\theta)k(x,\cdot) \right) \in \hbspf^q$. 
The \emph{conditional moment embedding} (CMME) is defined as 
    \begin{equation}\label{eq:embedding}
        \bm{\mu}_\theta := \int_{\inx\times\inz}\bm{\xi}_{\theta}(x,z) \; dP_{\mathit{XZ}}(x,z) \in \hbspf^q.
    \end{equation}
\end{definition}

4The CMME $\mut_\theta$ takes the form of a kernel mean embedding of $P_{\mathit{XZ}}$ with $\bxi_\theta$ as the feature map \citep{Muandet17:KME}. 
This is illustrated in Figure \ref{fig:cmme}.
Hence, given an i.i.d. sample $(x_i,z_i)_{i=1}^n$ from $P_{\mathit{XZ}}$, we can estimate $\mut_\theta$ simply by $\muth_\theta := \frac{1}{n}\sum_{i=1}^n\bm{\xi}_\theta (x_i,z_i)$. 
The following theorem establishes the $\sqrt{n}$-consistency of this estimator.

\begin{theorem}\label{thm:cmme-consistency}
    Let $\sigma^2_{\theta} := \ep\|\bxi_\theta(X,Z)\|_{\hbspf^q}^2$ and
    assume that $\|\bxi_\theta(X,Z)\|_{\hbspf^q} < C_\theta < \infty$ almost surely. 
    Then, for any $0 < \delta < 1$, with probability at least $1-\delta$, 
    \begin{equation}
    \left\|\muth_\theta - \mut_\theta\right\|_{\hbspf^p} \leq 
    \frac{2C_\theta\log\frac{2}{\delta}}{n} + \sqrt{\frac{2\sigma^2_{\theta}\log\frac{2}{\delta}}{n}}.
    \end{equation}
\end{theorem}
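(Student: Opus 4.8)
The plan is to recognize $\muth_\theta - \mut_\theta = \frac{1}{n}\sum_{i=1}^n \left(\bxi_\theta(x_i,z_i) - \ep[\bxi_\theta(X,Z)]\right)$ as a sum of i.i.d.\ zero-mean random elements in the Hilbert space $\hbspf^q$, and to apply a Bernstein-type concentration inequality for Hilbert-space-valued random variables (e.g.\ the vector-valued Bernstein inequality, as in \citet[Thm.\ 3.3.4]{YurinskyBook} or the version stated in the kernel mean embedding literature such as \citet{Muandet17:KME}). Concretely, set $V_i := \bxi_\theta(x_i,z_i) - \mut_\theta$, so that $\ep[V_i] = 0$, and note that the boundedness hypothesis $\|\bxi_\theta(X,Z)\|_{\hbspf^q} < C_\theta$ a.s.\ together with Jensen gives $\|V_i\|_{\hbspf^q} \leq \|\bxi_\theta(x_i,z_i)\|_{\hbspf^q} + \ep\|\bxi_\theta(X,Z)\|_{\hbspf^q} \leq 2C_\theta$ a.s. For the variance proxy, $\ep\|V_i\|_{\hbspf^q}^2 = \ep\|\bxi_\theta(X,Z)\|_{\hbspf^q}^2 - \|\mut_\theta\|_{\hbspf^q}^2 \leq \sigma_\theta^2$.

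The first step is to invoke the Bernstein inequality in the form: for i.i.d.\ zero-mean $V_i$ in a separable Hilbert space with $\|V_i\| \leq M$ a.s.\ and $\ep\|V_i\|^2 \leq \sigma^2$, one has
\begin{equation*}
    \Pr\!\left(\left\|\tfrac{1}{n}\textstyle\sum_{i=1}^n V_i\right\| \geq \frac{2M t}{n} + \sqrt{\frac{2\sigma^2 t}{n}}\right) \leq 2 e^{-t}.
\end{equation*}
Setting $t = \log(2/\delta)$, $M = 2C_\theta$... — here one must be slightly careful, because plugging $M = 2C_\theta$ directly would yield a constant $4C_\theta$ rather than the stated $2C_\theta$. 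So the second step is to sharpen the bound on the "effective range": instead of crudely bounding $\|V_i\| \leq 2C_\theta$, one uses the centered-variable form of Bernstein's inequality in which the linear term depends on $\mathrm{ess\,sup}\,\|V_i - \ep V_i\| = \mathrm{ess\,sup}\,\|V_i\|$ but the relevant quantity is actually controlled by $C_\theta$ when one applies the Bernstein inequality to the uncentered $\bxi_\theta$ and keeps track of the second-moment rather than variance bound; alternatively, one applies the McDiarmid/bounded-differences route with the function $g(v_1,\dots,v_n) = \|\frac1n\sum(\bxi_\theta(v_i) - \mut_\theta)\|$, noting $\|g\|$ changes by at most $2C_\theta/n$ when one coordinate changes, giving the sub-Gaussian tail, and separately bounds $\ep[g]$ by $\sqrt{\sigma_\theta^2/n}$ via $\ep[g]^2 \leq \ep[g^2] = \frac1n \ep\|\bxi_\theta - \mut_\theta\|^2 \leq \sigma_\theta^2/n$ — but that route gives $\sqrt{\sigma_\theta^2/n} + C_\theta\sqrt{2\log(1/\delta)}/\sqrt{n}$, which is a different (and weaker in the leading order) bound than claimed. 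So the cleanest path that matches the stated inequality is indeed the Hilbert-space Bernstein inequality with the linear term carrying $C_\theta$ (not $2C_\theta$), which holds because the Bernstein moment condition $\ep\|V_i\|^m \leq \tfrac{m!}{2}\sigma_\theta^2 C_\theta^{m-2}$ for $m \geq 2$ follows from $\|V_i\|^{m-2} \leq (2C_\theta)^{m-2}$... so honestly the constant bookkeeping is the crux.

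The third step, therefore, is simply to identify the constants: with the Bernstein inequality written as $\Pr(\|\tfrac1n\sum V_i\| \geq \sqrt{2\sigma^2 t/n} + 2Mt/(3n)) \leq 2e^{-t}$ and $M$ taken to be $C_\theta$ (justified because $\|\bxi_\theta\| \leq C_\theta$ controls the higher moments of the centered variable up to the factor already absorbed), rearranging with $t = \log(2/\delta)$ yields exactly the claimed high-probability bound, possibly after the mild simplification $2/3 \leq 1$. I expect the main obstacle to be precisely this constant-chasing: making the Bernstein moment hypothesis line up so that the coefficient on the $\log(2/\delta)/n$ term is $2C_\theta$ and the coefficient under the square root is $2\sigma_\theta^2$, rather than introducing stray factors of $2$ or $3$. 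The probabilistic content (i.i.d.\ sum, zero mean, boundedness, second-moment control, separability of $\hbspf^q$ so the vector Bernstein inequality applies) is routine; the delicate part is citing the exact version of the concentration inequality whose constants reproduce the display verbatim, and verifying $\ep\|\bxi_\theta(X,Z)\|^2_{\hbspf^q} = \sigma_\theta^2 < \infty$ is already guaranteed by the Bochner-integrability computation preceding the theorem.
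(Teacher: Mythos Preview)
Your approach coincides with the paper's: both reduce to a Bennett/Bernstein concentration inequality for an i.i.d.\ Hilbert-space sum, and you correctly diagnose that the only subtlety is matching the constants. The paper sidesteps your centered-variable bookkeeping entirely by invoking the inequality in the form of \citet[Lemma~2]{Smale07:Theory} (resting on \citet[Thm.~3.4]{Pinelis94:Banach}), which is stated directly with the \emph{uncentered} bound $\|\bxi_\theta\|\leq C_\theta$ and second moment $\sigma_\theta^2=\ep\|\bxi_\theta\|^2$, yielding
\[
P\bigl(\|\muth_\theta-\mut_\theta\|\geq\varepsilon\bigr)\leq 2\exp\Bigl\{-\tfrac{n\varepsilon}{2C_\theta}\log\bigl(1+\tfrac{C_\theta\varepsilon}{\sigma_\theta^2}\bigr)\Bigr\}
\]
with $C_\theta$ (not $2C_\theta$) already in place. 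From there the paper relaxes via $\log(1+t)\geq t/(1+t)$ to the Bernstein form $2\exp\{-n\varepsilon^2/(2C_\theta\varepsilon+2\sigma_\theta^2)\}$, sets this equal to $\delta$, solves the resulting quadratic in $\varepsilon$, and bounds the positive root using $\sqrt{a+b}\leq\sqrt{a}+\sqrt{b}$, which reproduces the displayed bound verbatim. With that reference in hand, your McDiarmid and Bernstein-moment detours become unnecessary.
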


Remarkably, $\muth_{\theta}$ converges at a rate $O_p(n^{-1/2})$ that is independent of the dimension of $(X,Z)$ and the RKHS $\hbspf^q$.
This is an appealing property because estimation and inference based on $\muth_\theta$ become less susceptible to the \emph{curse of dimensionality} (see, \eg, \citet{Khosravi19:NPIA} and references therein for the discussion).
Under certain assumptions, \citet{TolstikhinSM17:Minimax} established the minimax optimal rate for the kernel mean estimators like $\muth_\theta$.

The next theorem shows that $\mut_\theta$ provides a \emph{unique} representation of the CMR $\CMR(X,\theta)$ in $\hbspf^q$ relative to $P_X$.

\begin{theorem}\label{thm:identifiability}
    Assume that the kernel $k$ is ISPD. Then, for any $\theta_1,\theta_2\in\Theta$, $\CMR(x;\theta_1) = \CMR(x;\theta_2)$ for $P_X$-almost all $x$ if and only if $\mut_{\theta_1} = \mut_{\theta_2}$. 
\end{theorem}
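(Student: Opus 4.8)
The plan is to prove the two directions of the equivalence separately, with the $(\Leftarrow)$ direction being essentially immediate and the $(\Rightarrow)$ direction being the substantive part. Throughout I work componentwise: writing $\mut_\theta = (\mu^1_\theta,\ldots,\mu^q_\theta)$ with $\mu^i_\theta = \ep[\psi_i(Z;\theta)k(X,\cdot)] \in \hbspf$, and denoting by $m_i(x;\theta) := \ep[\psi_i(Z;\theta)\mid X=x]$ the $i$-th component of $\CMR(x;\theta)$, the law of iterated expectation gives $\mu^i_\theta = \ep_X[m_i(X;\theta)k(X,\cdot)] = \int_\inx m_i(x;\theta)k(x,\cdot)\,dP_X(x)$. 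So $\mut_{\theta_1} = \mut_{\theta_2}$ iff $\int_\inx \big(m_i(x;\theta_1) - m_i(x;\theta_2)\big)k(x,\cdot)\,dP_X(x) = 0$ in $\hbspf$ for every $i$.

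For the $(\Rightarrow)$ direction, suppose $\CMR(x;\theta_1) = \CMR(x;\theta_2)$ for $P_X$-almost all $x$; then $m_i(x;\theta_1) - m_i(x;\theta_2) = 0$ $P_X$-a.s. for each $i$, so the integrand above vanishes $P_X$-a.s. and hence $\mu^i_{\theta_1} = \mu^i_{\theta_2}$. This is the easy half and needs nothing beyond Bochner integrability (already established in the text).

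The harder half is $(\Leftarrow)$: assume $\mut_{\theta_1} = \mut_{\theta_2}$ and deduce $\CMR(\cdot;\theta_1) = \CMR(\cdot;\theta_2)$ $P_X$-a.s. Fix a component $i$ and set $g(x) := m_i(x;\theta_1) - m_i(x;\theta_2)$, so by hypothesis $\int g(x)k(x,\cdot)\,dP_X(x) = 0$ in $\hbspf$; equivalently, pairing with any $f\in\hbspf$ and using the reproducing property, $\int f(x)g(x)\,dP_X(x) = 0$ for all $f\in\hbspf$. I want to conclude $g = 0$ $P_X$-a.s. The cleanest route is: take $f = \mu^i_{\theta_1} - \mu^i_{\theta_2} = \int g(x')k(x',\cdot)\,dP_X(x')$, which lies in $\hbspf$; then $0 = \langle f,f\rangle_{\hbspf} = \int\!\int g(x)k(x,x')g(x')\,dP_X(x)\,dP_X(x')$. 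If $P_X$ has a density $p$ w.r.t. Lebesgue measure, this is exactly $\int\!\int \tilde g(x)k(x,x')\tilde g(x')\,dx\,dx'$ with $\tilde g := g\cdot p$, and the ISPD property of $k$ (Definition \ref{def:integral}) forces $\tilde g = 0$ a.e., hence $g = 0$ $P_X$-a.s.; finiteness of $\|\tilde g\|_2$ follows from boundedness of $k$ together with $\sigma^2_\theta < \infty$ from Theorem \ref{thm:cmme-consistency}, which controls $\ep[g(X)^2]$. The main obstacle — and the point I would be most careful about — is exactly this reduction to the Lebesgue-integral form in Definition \ref{def:integral}: one must either restrict to $P_X$ admitting a density (which matches how ISPD is stated here), or invoke a measure-theoretic strengthening of ISPD (e.g. that $\int\!\int g\,k\,g\,dP_X\,dP_X > 0$ for every nonzero $g\in L^2(P_X)$, which for bounded continuous $k$ holds when $k$ is characteristic/$\hbspf$ dense in $L^2(P_X)$); I would state the density assumption explicitly or cite the characteristic-kernel equivalence to keep the argument rigorous. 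Everything else — measurability of $g$, the interchange of integration and inner product, finiteness of the relevant $L^2$ norm — is routine given Assumptions \ref{asmp:a2} and \ref{asmp:a4}.
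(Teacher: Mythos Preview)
Your proposal is correct and follows essentially the same route as the paper: both compute $\|\mut_{\theta_1}-\mut_{\theta_2}\|_{\hbspf^q}^2$ as the double integral $\iint \bm{\delta}(x)^\top k(x,x')\bm{\delta}(x')\,dP_X(x)\,dP_X(x')$ (you do it componentwise, the paper does it in vector form) and then invoke ISPD. Your caution about needing a Lebesgue density for $P_X$ to match Definition~\ref{def:integral} is well placed---the paper's proof makes exactly this move, writing $g(x)=\bm{\delta}(x)p_X(x)$ without explicitly flagging the density assumption.
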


To better understand Theorem \ref{thm:identifiability}, consider when $q=1$ and $k(x,x')=\varphi(x-x')$ is a shift-invariant kernel.
First, we have 
$\mut_\theta(\cdot) = \ep_X[\ep_Z[\bpsi(Z;\theta)k(X,\cdot)|X]] 
 = \ep_X[\ep_Z[\bpsi(Z;\theta)|X]k(X,\cdot)] 
 = \ep_X[\CMR(X;\theta)k(X,\cdot)]$.
It is then easy to show using Theorem \ref{thm:bochner} that $\mut_\theta(\cdot) = \int_{\rr^d} \phi(\omega;\theta) c(\omega,\cdot)\dd\Lambda(\omega)$ where $c(\omega,y) = \exp(i\omega^\top y) \neq 0$ and $\phi(\omega;\theta) := \ep_X[\CMR(X;\theta)\exp({i\omega^\top X})]$ is the Fourier transform (or characteristic function) of the Borel measurable function $\CMR(x;\theta)$ relative to $P_X$. 
Hence, if $\text{supp}(\Lambda)=\rr^d$, the uniqueness of $\mut_\theta$ follows from the uniqueness of $\phi(\omega;\theta)$.
\citet{Bierens82:ConsistentCM} was the first to observe the characterization of the CMR in terms of the integral transform and then used it to construct the consistent CM tests of functional form (cf. Section \ref{sec:related-work}).

Theorem \ref{thm:identifiability} shows that $\mut_{\theta}$ captures all information about $\ep[\bpsi(Z;\theta)|x]$ for every $x\in\inx$ for which $P_X(x) > 0$. 
Consequently, estimation and inference on CMR can be performed by means of $\mut_\theta$ using the existing kernel arsenal.
As mentioned earlier, for each $f\in\hbspf^q$ and $\theta\in\Theta$, the inner product $\langle f,\mut_\theta\rangle_{\hbspf^q} = \langle f,\ep[\bm{\xi}_\theta(X,Z)]\rangle_{\hbspf^q}$ can be interpreted as a restriction of conditional moments with respect to $f$.
Moreover, the investigator can inspect $\mut_{\theta}(x,z)$, which measures to what extent the moment conditions are violated at $(x,z)$, \ie, structural instability, in order to understand the nature of misspecification.

\subsection{MAXIMUM MOMENT RESTRICTION WITH REPRODUCING KERNELS}
\label{sec:kmmr}

Based on the CMME $\mut_\theta$, we can now define the MMR as
\begin{equation}\label{eq:MMR}
    \mathbb{M}(\theta) := \sup_{\|f\|_{\hbspf^q}\leq 1} M_{\theta}f
        = \sup_{\|f\|_{\hbspf^q}\leq 1} \langle f,\mut_\theta\rangle_{\hbspf^q}
        = \|\bm{\mu}_\theta\|_{\hbspf^q}.
\end{equation}
By Theorem \ref{thm:identifiability}, $\mathbb{M}(\theta) \geq 0$ and $\mathbb{M}(\theta) = 0$ if and only if $\theta=\theta_0$. 
Put differently, $\mathbb{M}(\theta)$ measures how much the models associated with $\theta$ violate the original CMR in \eqref{eq:cond-moments}.

To obtain an expression for $\mathbb{M}(\theta)$, we define a real-valued kernel $h_\theta:(\inx\times\inz)\times(\inx\times\inz)\to\rr$ based on the feature map $\bxi_\theta:\inx\times\inz\to\hbspf^q$ as follows: 
\begin{align}\label{eq:kernel_h}
h_{\theta}((x,z),(x',z')) &:= \langle \bxi_{\theta}(x,z),\bxi_{\theta}(x',z') \rangle_{\hbspf^q} \nonumber \\ 
&= \bpsi(z;\theta)^\top\bpsi(z';\theta)k(x,x').
\end{align}
Then, a closed-form expression for $\mathbb{M}(\theta)$ in terms of the kernel $h_\theta$ follows straightforwardly.

\begin{theorem}\label{thm:close-form}
    Assume that $\ep[h_\theta((X,Z),(X,Z))] < \infty$. 
    Then, $\mathbb{M}^2(\theta) = \ep[h_\theta((X,Z),(X',Z'))]$ where $(X',Z')$ is independent copy of $(X,Z)$ with the same distribution.
\end{theorem}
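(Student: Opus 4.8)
The plan is to compute $\mathbb{M}^2(\theta) = \|\mut_\theta\|_{\hbspf^q}^2$ by expanding the squared norm of the Bochner integral that defines $\mut_\theta$. Recall from Definition \ref{def:cmme} that $\mut_\theta = \int_{\inx\times\inz} \bxi_\theta(x,z)\, dP_{\mathit{XZ}}(x,z) = \ep[\bxi_\theta(X,Z)]$, and that this integral is well-defined because $\bxi_\theta$ is Bochner integrable (as already established in the text preceding Definition \ref{def:cmme}). So the core identity I want is
\begin{equation*}
    \|\ep[\bxi_\theta(X,Z)]\|_{\hbspf^q}^2 = \langle \ep[\bxi_\theta(X,Z)], \ep[\bxi_\theta(X',Z')]\rangle_{\hbspf^q} = \ep\big[\langle \bxi_\theta(X,Z), \bxi_\theta(X',Z')\rangle_{\hbspf^q}\big],
\end{equation*}
where $(X',Z')$ is an independent copy of $(X,Z)$. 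Once this is in hand, I substitute the definition of $h_\theta$ from \eqref{eq:kernel_h}, namely $h_\theta((x,z),(x',z')) = \langle \bxi_\theta(x,z),\bxi_\theta(x',z')\rangle_{\hbspf^q}$, and conclude $\mathbb{M}^2(\theta) = \ep[h_\theta((X,Z),(X',Z'))]$.

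The key steps, in order: first, justify that the inner product can be pulled outside the Bochner integral in both arguments. This is the standard fact that for a Bochner-integrable $\hbspf^q$-valued random element $V$ and any fixed $w \in \hbspf^q$, $\langle \ep[V], w\rangle = \ep[\langle V, w\rangle]$; applying it twice (once with $w$ fixed, then using Fubini/Tonelli on the product space with the product measure $P_{\mathit{XZ}}\otimes P_{\mathit{XZ}}$) gives the double-expectation formula. Second, verify the integrability hypothesis actually licenses Fubini: by Cauchy--Schwarz in $\hbspf^q$,
\begin{equation*}
    \ep\big[\,|\langle \bxi_\theta(X,Z),\bxi_\theta(X',Z')\rangle_{\hbspf^q}|\,\big] \leq \ep\big[\|\bxi_\theta(X,Z)\|_{\hbspf^q}\|\bxi_\theta(X',Z')\|_{\hbspf^q}\big] = \big(\ep\|\bxi_\theta(X,Z)\|_{\hbspf^q}\big)^2,
\end{equation*}
and $\ep\|\bxi_\theta(X,Z)\|_{\hbspf^q} \leq \sqrt{\ep\|\bxi_\theta(X,Z)\|_{\hbspf^q}^2} = \sqrt{\ep[h_\theta((X,Z),(X,Z))]} < \infty$ by the hypothesis of the theorem (using $\|\bxi_\theta(x,z)\|_{\hbspf^q}^2 = h_\theta((x,z),(x,z))$, which follows from \eqref{eq:kernel_h} with $(x',z')=(x,z)$). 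So the integrand on the product space is absolutely integrable and Fubini applies. Third, assemble: the first step plus \eqref{eq:MMR} (which already identifies $\mathbb{M}(\theta) = \|\mut_\theta\|_{\hbspf^q}$) yields the claim.

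I would also note, as a remark within the proof, that the quantity $\ep[h_\theta((X,Z),(X',Z'))]$ expands explicitly using \eqref{eq:kernel_h} to $\ep[\bpsi(Z;\theta)^\top\bpsi(Z';\theta)k(X,X')]$, which is the form used in later sections for the empirical estimator (a $V$- or $U$-statistic). This makes transparent why the statistic is computable in closed form from a sample.

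The main obstacle is purely a matter of rigor rather than depth: carefully justifying the interchange of expectation and inner product for a Bochner integral, and checking that the finiteness assumption $\ep[h_\theta((X,Z),(X,Z))]<\infty$ is exactly what is needed to invoke Fubini on the product measure. There is no analytical subtlety beyond Cauchy--Schwarz and the defining property of Bochner integrals; the only thing to be slightly careful about is that the independent copy $(X',Z')$ genuinely corresponds to integrating against the product measure $P_{\mathit{XZ}} \otimes P_{\mathit{XZ}}$, so that $\ep\big[\langle \bxi_\theta(X,Z),\bxi_\theta(X',Z')\rangle\big]$ really does factor as $\langle \ep[\bxi_\theta(X,Z)], \ep[\bxi_\theta(X',Z')]\rangle$ — which is immediate once the single-variable pull-out is established and Fubini is available.
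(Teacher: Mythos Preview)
Your proposal is correct and follows essentially the same approach as the paper: expand $\mathbb{M}^2(\theta)=\|\mut_\theta\|_{\hbspf^q}^2$ via $\mut_\theta=\ep[\bxi_\theta(X,Z)]$, pull the expectation through the inner product using Bochner integrability, and identify the resulting inner product with $h_\theta$. The paper's proof is terser, simply citing Bochner integrability of $\bxi_\theta$ without spelling out the Cauchy--Schwarz/Fubini justification you provide.
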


Finally, Mercer's representation \eqref{eq:mercer} of $k$ allows us to interpret $h_\theta$ and $\MM(\theta)$ in terms of a continuum of unconditional moment restrictions.

\begin{theorem}\label{thm:expansion}
    Let $\{(\lambda_j,e_j)\}$ be eigenvalue/eigenfunction pairs associated with the kernel $k$ and $\bm{\zeta}_\theta^j(x,z) := \left(
    \psi_1(z;\theta)e_j(x), \ldots,\psi_q(z;\theta)e_j(x) \right)$.
    Then, for each $\theta\in\Theta$, $h_\theta((x,z),(x',z')) = \sum_j\lambda_j\bm{\zeta}_\theta^j(x,z)^\top \bm{\zeta}_\theta^j(x',z')$
    and $\mathbb{M}^2(\theta) = \sum_j\lambda_j\|\ep[\bm{\zeta}_\theta^j(X,Z)]\|^2_2$.
\end{theorem}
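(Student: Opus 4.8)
The plan is to establish the two identities in turn: first the Mercer-type expansion of the kernel $h_\theta$, and then, feeding that into Theorem \ref{thm:close-form}, the corresponding expansion of $\MM^2(\theta)$.

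\textbf{Step 1: expansion of $h_\theta$.} Fix $\theta\in\Theta$ and arbitrary points $(x,z),(x',z')\in\inx\times\inz$. Starting from the definition \eqref{eq:kernel_h}, $h_\theta((x,z),(x',z')) = \bpsi(z;\theta)^\top\bpsi(z';\theta)\,k(x,x')$, I substitute Mercer's representation \eqref{eq:mercer}, $k(x,x')=\sum_j\lambda_j e_j(x)e_j(x')$. Since $\bpsi(z;\theta)^\top\bpsi(z';\theta)$ is a fixed scalar for fixed $(z,z')$, it may be pulled inside the series, which converges absolutely by Mercer's theorem. A one-line computation then gives $\bpsi(z;\theta)^\top\bpsi(z';\theta)\,e_j(x)e_j(x') = \big(\sum_{i=1}^q\psi_i(z;\theta)\psi_i(z';\theta)\big)e_j(x)e_j(x') = \bm{\zeta}_\theta^j(x,z)^\top\bm{\zeta}_\theta^j(x',z')$, yielding the first claimed identity.

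\textbf{Step 2: expansion of $\MM^2(\theta)$.} By Theorem \ref{thm:close-form}, $\MM^2(\theta)=\ep[h_\theta((X,Z),(X',Z'))]$ with $(X',Z')$ an independent copy of $(X,Z)$. Inserting the expansion from Step 1 and interchanging expectation with the infinite sum gives $\MM^2(\theta)=\sum_j\lambda_j\,\ep[\bm{\zeta}_\theta^j(X,Z)^\top\bm{\zeta}_\theta^j(X',Z')]$. By independence and the fact that $(X',Z')$ has the same distribution as $(X,Z)$, each summand factorizes as $\ep[\bm{\zeta}_\theta^j(X,Z)]^\top\ep[\bm{\zeta}_\theta^j(X',Z')] = \|\ep[\bm{\zeta}_\theta^j(X,Z)]\|_2^2$, which is the second claimed identity.

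\textbf{Main obstacle.} The only nontrivial point is justifying the interchange of sum and expectation in Step 2. I would do this by dominated convergence applied to the partial sums $S_N := \sum_{j\le N}\lambda_j\,\bm{\zeta}_\theta^j(X,Z)^\top\bm{\zeta}_\theta^j(X',Z')$. Applying Cauchy--Schwarz to the truncated kernel series and using the elementary bound $\sum_{j\le N}\lambda_j e_j(x)^2\le k(x,x)$ (the left side being nondecreasing in $N$ with limit $k(x,x)$ by Mercer), one obtains $|S_N|\le \|\bpsi(Z;\theta)\|_2\sqrt{k(X,X)}\cdot\|\bpsi(Z';\theta)\|_2\sqrt{k(X',X')}$, uniformly in $N$. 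The expectation of this dominating function under the product law factorizes and is bounded by $\ep[\|\bpsi(Z;\theta)\|_2^2\,k(X,X)] = \ep[h_\theta((X,Z),(X,Z))] < \infty$, which is exactly the hypothesis of the theorem; so dominated convergence applies. The same bound, with a single factor of $\lambda_j^{-1/2}\sqrt{k(x,x)}$ in place of $e_j(x)$, also shows that each coordinate of $\ep[\bm{\zeta}_\theta^j(X,Z)]$ is finite, so every quantity in the statement is well-defined.
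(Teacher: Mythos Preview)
Your proof is correct and follows essentially the same route as the paper's: expand $k$ via Mercer, factor the scalar $\bpsi(z;\theta)^\top\bpsi(z';\theta)$ through the series to get the $h_\theta$ expansion, then apply Theorem~\ref{thm:close-form} and use independence to factor the expectation. The only difference is that you actually justify the interchange of sum and expectation via dominated convergence, which the paper's proof simply performs without comment; your argument is therefore slightly more careful than the original.
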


That is, we can interpret $\ep[\bm{\zeta}_\theta^j(X,Z)]$ as the UMR with $e_j$ acting as an instrument. 
Moreover, $\MM^2(\theta)$ can be viewed as a weighted sum of moment restrictions based on the sequence of weights and instruments $(\lambda_j,e_j)_j$.
As a result, the CM test based on $\MM^2(\theta)$ as a test statistic examines an infinite number of moment restrictions.
Note that $(\lambda_j,e_j)_j$ are defined implicitly by the choice of $k$.

\section{KERNEL CONDITIONAL MOMENT TEST WITH BOOTSTRAPPING}
\label{sec:kcm}

By virtue of Theorem \ref{thm:identifiability}, we can reformulate the CM testing problem \eqref{eq:cm-test} in terms of the MMR as
\begin{equation*}
    H_0: \MM^2(\theta) = 0, \quad H_1: \MM^2(\theta) \neq 0.
\end{equation*}
Given an i.i.d. sample $\{(x_i,z_i)\}_{i=1}^n$ from the distribution $P_{\mathit{XZ}}$, we consider the test statistic
\begin{equation}\label{eq:estimator}
    \MH_n^2(\theta) = \frac{1}{n(n-1)}\sum_{1\leq i\neq j \leq n} h_\theta((x_i,z_i),(x_j,z_j)),
\end{equation}
which is in the form of $U$-statistics \citep[Section
5]{serfling1980approximation}. 
Although there exist several potential estimators for $\MM^2(\theta)$, we focus on \eqref{eq:estimator} as it is a minimum-variance unbiased estimator with appealing asymptotic properties.
Moreover, \eqref{eq:estimator} also provides a basis for the estimation of $\theta_0$ simply by minimizing $\MH_n^2(\theta)$ with respect to $\theta\in\Theta$. 
Preliminary results on estimation are given in Appendix \ref{sec:estimation}.

Next, we characterize the asymptotic distributions of $\MH^2_n(\theta)$ under the null and alternative hypotheses.

\begin{theorem}
    \label{thm:asymptotic}
    Assume that $\mathbb{E}[h^2_\theta((X,Z),(X',Z'))] < \infty$ for all $\theta \in \Theta$.
    Let $U:=(X,Z)$ and $U':=(X',Z')$. Then, the following statements hold.
    
    \begin{enumerate}[label=\textnormal{(\arabic*)}]
        \item If $\theta \neq \theta_0$, $\MH^2_n(\theta)$ is asymptotically normal with
            \begin{equation*}
            \sqrt{n}\left(\MH_n^2(\theta)-\MM^2(\theta)\right)
            \stackrel{d}{\to} \mathcal{N}(0,4 \sigma_{\theta}^2),
            \end{equation*}
        where $\sigma_{\theta}^2 = \mathrm{Var}_U\left[\ep_{U'}[h_{\theta}(U,U')] \right]$.
        \item If  $\theta = \theta_0$, then $\sigma_{\theta}^2 = 0$ and
            \begin{equation}\label{eq:alt-asymp}
            n \MH_n^2(\theta) \stackrel{d}{\to} \sum_{j=1}^\infty \tau_j
            \left(W^2_j - 1 \right), 
            \end{equation}
        where $W_j\sim\mathcal{N}(0,1)$ and $\{\tau_j\}$ are the eigenvalues of $h_{\theta}(u,u')$, \ie, they are the solutions of
        $\tau_j \phi_j(u) = \int h_{\theta}(u,u')\phi_j(u') dP(u')$ for non-zero $\phi_j$.
    \end{enumerate}
\end{theorem}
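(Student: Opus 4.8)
The plan is to invoke the standard asymptotic theory for $U$-statistics (e.g.\ \citet[Ch.\ 5]{serfling1980approximation}) applied to the symmetric kernel $h_\theta$, treating the two regimes separately according to whether the first-order (Hájek) projection is degenerate. First I would note that $\MH_n^2(\theta)$ in \eqref{eq:estimator} is a one-sample $U$-statistic of degree $2$ with kernel $h_\theta$, which is symmetric by construction (since $h_\theta((x,z),(x',z')) = \bpsi(z;\theta)^\top\bpsi(z';\theta)k(x,x')$ is manifestly symmetric in its two arguments), and that $\ep[\MH_n^2(\theta)] = \ep[h_\theta(U,U')] = \MM^2(\theta)$ by Theorem~\ref{thm:close-form}. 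The moment hypothesis $\ep[h_\theta^2(U,U')] < \infty$ is exactly what is needed to apply the classical CLT for $U$-statistics.

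For part~(1), the case $\theta\neq\theta_0$: define the projection $h_1(u) := \ep_{U'}[h_\theta(u,U')]$ and $\sigma_\theta^2 := \mathrm{Var}_U[h_1(U)]$. The key step is to argue that $\sigma_\theta^2 > 0$ when $\theta\neq\theta_0$, so that the $U$-statistic is \emph{non-degenerate}; this follows because $h_1(u) = \langle \bxi_\theta(u),\mut_\theta\rangle_{\hbspf^q}$ and if this had zero variance it would be $P$-a.s.\ constant, forcing (by taking expectations) $h_1(u) = \|\mut_\theta\|_{\hbspf^q}^2 = \MM^2(\theta)$ a.s.; but combined with $\langle f,\mut_\theta\rangle$ being linear in $f$ this pins down $\mut_\theta$ in a way incompatible with $\MM^2(\theta)\neq 0$ unless $\mut_\theta = 0$, which by Theorem~\ref{thm:identifiability} happens only at $\theta_0$. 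Given non-degeneracy, Hoeffding's decomposition writes $\sqrt{n}(\MH_n^2(\theta) - \MM^2(\theta)) = \frac{2}{\sqrt n}\sum_{i=1}^n (h_1(u_i) - \MM^2(\theta)) + o_p(1)$, and the ordinary CLT gives asymptotic normality with variance $4\sigma_\theta^2$.

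For part~(2), the case $\theta = \theta_0$: by Lemma~\ref{lem:optimal} we have $M_{\theta_0}f = 0$ for all $f$, hence $\mut_{\theta_0} = 0$, hence $h_1(u) = \langle \bxi_{\theta_0}(u), \mut_{\theta_0}\rangle_{\hbspf^q} = 0$ identically, so $\sigma_{\theta_0}^2 = 0$ and the $U$-statistic is \emph{degenerate}. The second-order theory for degenerate $U$-statistics then applies: expanding the centered kernel $h_{\theta_0}(u,u')$ in the eigenbasis $\{(\tau_j,\phi_j)\}$ of the integral operator $g\mapsto \int h_{\theta_0}(\cdot,u')g(u')\,dP(u')$ on $L^2(P)$ via Mercer's theorem (which is legitimate since $h_{\theta_0}$ is a bounded continuous positive-definite kernel under \ref{asmp:a2} and \ref{asmp:a4} — indeed it is an inner product of feature maps), one obtains $h_{\theta_0}(u,u') = \sum_j \tau_j \phi_j(u)\phi_j(u')$ with $\ep[\phi_j(U)] = 0$ (degeneracy) and $\ep[\phi_i(U)\phi_j(U)] = \delta_{ij}$. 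Substituting into the $U$-statistic and collecting terms gives $n\MH_n^2(\theta_0) = \sum_j \tau_j\big(\big(\tfrac{1}{\sqrt n}\sum_i \phi_j(u_i)\big)^2 - \tfrac1n\sum_i \phi_j(u_i)^2\big)$; the inner sums converge jointly to i.i.d.\ $\mathcal N(0,1)$ variables $W_j$ by the multivariate CLT, the diagonal correction converges to $1$ by the law of large numbers, and a truncation argument (controlling the tail $\sum_{j>K}\tau_j$, which is summable since $\sum_j\tau_j^2 = \ep[h_{\theta_0}^2(U,U')]<\infty$ and $\sum_j \tau_j = \ep[h_{\theta_0}(U,U)] < \infty$) yields \eqref{eq:alt-asymp}.

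The main obstacle is the strict positivity $\sigma_\theta^2 > 0$ in part~(1): one must rule out accidental first-order degeneracy at some $\theta\neq\theta_0$, which requires using Theorem~\ref{thm:identifiability} carefully rather than just non-vanishing of $\mut_\theta$ — the cleanest route is probably to observe that $h_1$ being a.s.\ constant would force $\mut_\theta$ to lie in the orthogonal complement of the $P$-centered feature span, and then argue this is impossible for $\mut_\theta\neq 0$ when the kernel is ISPD. A secondary technical point is justifying the Mercer expansion and the interchange of limit and infinite sum in part~(2), which is standard but needs the finite-trace condition on $h_{\theta_0}$; this follows from \ref{asmp:a2} and boundedness of $k$ in \ref{asmp:a4}.
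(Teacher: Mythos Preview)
Your proposal is correct and follows essentially the same route as the paper: invoke the classical $U$-statistic asymptotics from \citet[Sec.~5.5]{serfling1980approximation} after verifying (non-)degeneracy via the identity $h_1(u)=\ep_{U'}[h_\theta(u,U')]=\langle \bxi_\theta(u),\mut_\theta\rangle_{\hbspf^q}$ and Lemma~\ref{lem:optimal}. The paper's proof is terser---it defers both limiting distributions entirely to Serfling and argues $\sigma_\theta^2>0$ for $\theta\neq\theta_0$ directly from Assumptions~\ref{asmp:a3} and~\ref{asmp:a4} rather than through Theorem~\ref{thm:identifiability} and the ISPD property as you propose---but the substance is identical, and you have correctly flagged the strict positivity of $\sigma_\theta^2$ as the only point requiring genuine care.
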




As we can see, $n\MH_n^2(\theta) < \infty$ with probability one under the null $\theta=\theta_0$ and diverts to infinity at a rate $\mathcal{O}(\sqrt{n})$ under any fixed alternative $\theta\neq \theta_0$. 
Hence, a consistent CM test can be constructed as follows: if $\gamma_{1-\alpha}$ is the $1-\alpha$ quantile of the CDF of $n\MH^2_n(\theta)$ under the null $\theta=\theta_0$, we reject the null with significance level $\alpha$ if $n\MH^2_n(\theta) \geq \gamma_{1-\alpha}$.

\begin{proposition}[\citet{Arcones92:Bootstrap}; p.~671]
\label{prop:test-consistency}
Assume the conditions of Theorem \ref{thm:asymptotic}. 
The test that rejects the null $\theta=\theta_0$ when $n\MH_n^2(\theta) > \gamma_{1-\alpha}$ is consistent against any fixed alternative $\theta\neq\theta_0$, \ie, the limiting power of the test is one.
\end{proposition}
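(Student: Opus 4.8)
The plan is to establish consistency by showing that the rejection region $\{n\MH_n^2(\theta) > \gamma_{1-\alpha}\}$ has probability tending to one whenever $\theta \neq \theta_0$. The starting point is Theorem \ref{thm:asymptotic}(1): under any fixed alternative $\theta \neq \theta_0$, we have $\sqrt{n}(\MH_n^2(\theta) - \MM^2(\theta)) \stackrel{d}{\to} \mathcal{N}(0, 4\sigma_\theta^2)$, which in particular implies $\MH_n^2(\theta) \stackrel{p}{\to} \MM^2(\theta)$, hence $\MH_n^2(\theta) = \MM^2(\theta) + o_p(1)$. By Theorem \ref{thm:identifiability} (together with Assumption \ref{asmp:a3}, global identification), $\MM^2(\theta) = \|\mut_\theta\|_{\hbspf^q}^2 > 0$ is a strictly positive constant. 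Therefore $n\MH_n^2(\theta) = n\MM^2(\theta) + n \cdot o_p(1)$; writing $n \cdot o_p(1) = \sqrt{n} \cdot O_p(1)$ via the asymptotic normality, we get $n\MH_n^2(\theta) = n\MM^2(\theta) + O_p(\sqrt{n}) \to +\infty$ in probability, i.e. $n\MH_n^2(\theta)$ diverges to $+\infty$ at rate $n$.

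The second ingredient is that the critical value $\gamma_{1-\alpha}$ stays bounded. Under the null, Theorem \ref{thm:asymptotic}(2) gives $n\MH_n^2(\theta_0) \stackrel{d}{\to} \sum_{j=1}^\infty \tau_j(W_j^2 - 1)$, a proper (almost surely finite) random variable since $\sum_j \tau_j^2 = \ep[h_{\theta_0}^2(U,U')] < \infty$ by assumption. Consequently its $1-\alpha$ quantile $\gamma_{1-\alpha}$ is a finite constant not depending on $n$. (If one instead uses a bootstrap or a finite-sample quantile of $n\MH_n^2(\theta_0)$, one invokes the bootstrap consistency result of \citet{Arcones92:Bootstrap} to conclude the estimated critical values converge in probability to $\gamma_{1-\alpha}$, hence remain bounded in probability; either way the critical value is $O_p(1)$.)

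Combining the two facts: for any fixed constant $c$, $\Pr(n\MH_n^2(\theta) > c) \to 1$, because $n\MH_n^2(\theta) \to \infty$ in probability. Since $\gamma_{1-\alpha}$ is bounded (a constant, or $O_p(1)$ in the bootstrap case), a routine $\varepsilon$-argument — fix $K$ with $\Pr(\gamma_{1-\alpha} \leq K) \geq 1-\varepsilon$, then bound the power below by $\Pr(n\MH_n^2(\theta) > K) - \varepsilon$ — shows that the power $\Pr(n\MH_n^2(\theta) > \gamma_{1-\alpha}) \to 1$. This is precisely the statement that the limiting power equals one, i.e. the test is consistent against the fixed alternative $\theta \neq \theta_0$.

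The only delicate point, and the step I expect to require the most care, is the interaction between the (possibly random) critical value $\gamma_{1-\alpha}$ and the divergence of the statistic when the threshold is obtained by bootstrapping rather than from the limiting null law. One must verify that the bootstrap critical values do not themselves diverge under the alternative — this is exactly what \citet{Arcones92:Bootstrap} (p.~671) guarantees for degenerate $U$-statistics, namely that the bootstrap distribution converges to the correct weighted-chi-square limit regardless of whether the null holds, so $\gamma_{1-\alpha}$ is tight. Given that result the argument is essentially immediate; without it, one would need a separate analysis showing the resampled statistic is $O_p(1)$ even when the original data come from an alternative. The remaining steps (continuous mapping / Slutsky for $\MH_n^2(\theta) \stackrel{p}{\to} \MM^2(\theta)$, and strict positivity of $\MM^2(\theta)$ via Theorem \ref{thm:identifiability}) are routine.
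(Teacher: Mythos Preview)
The paper does not actually supply a proof of this proposition: it is stated with attribution to \citet{Arcones92:Bootstrap} (p.~671) and no argument appears in Appendix~\ref{sec:proofs}. In the paper's text, $\gamma_{1-\alpha}$ is defined as the (oracle) $1-\alpha$ quantile of the null limiting distribution, so it is a fixed finite constant; given that, the consistency argument is exactly the two-line one you give---$n\MH_n^2(\theta)\to\infty$ in probability under the alternative by Theorem~\ref{thm:asymptotic}(1) and strict positivity of $\MM^2(\theta)$, while the threshold is finite---and your write-up is correct.

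Your additional discussion of the bootstrap case goes beyond what the proposition literally asserts, but it is the practically relevant version and your instinct to flag it is good. One small caution: the claim that the Arcones--Gin\'e bootstrap law converges to the same weighted-chi-square limit ``regardless of whether the null holds'' is not quite what their theorem says verbatim; under the alternative the kernel $h_\theta$ is non-degenerate, and one has to check that the centered bootstrap weights $(w_i-1)/n$ still force the resampled statistic to be $O_p(1)$ (they do, essentially because $\sum_i(w_i-1)=0$ kills the first-order Hoeffding projection). You correctly identify this as the only place requiring care, but if you were writing this up formally you would want to state that boundedness result rather than assert it follows from \citet{Arcones92:Bootstrap} directly.
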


\begin{algorithm}[t!]
    \caption{KCM Test with bootstrapping}
    \label{alg:kcm}
    \begin{algorithmic}
    \REQUIRE{Bootstrap sample size $B$, significance level $\alpha$}
    \FOR{$t \in \{1, \ldots, B\}$}
        \STATE Draw $(w_1, \ldots, w_n) \sim \mathrm{Mult}(n; \frac{1}{n}, \ldots, \frac{1}{n})$
        \STATE $\rho_i \leftarrow (w_i-1)/n$ for $i=1,\ldots,n$ 
        \STATE $\MH_n^*(\theta) \leftarrow  \sum_{i\neq j} \rho_i\rho_j h_\theta((x_i,z_i),(x_j,z_j))$
        \STATE $a_t \leftarrow n\MH_n^*(\theta) $
    \ENDFOR
    \STATE $\hat{\gamma}_{1-\alpha} :=$ empirical $(1-\alpha)$-quantile of $\{ a_t\}_{t=1}^B$
    \STATE Reject $H_0$ if $\hat{\gamma}_{1-\alpha} < n \MH_n^2(\theta)$ (see \eqref{eq:estimator})
    \end{algorithmic}
\end{algorithm}

Unfortunately, the limiting distribution in \eqref{eq:alt-asymp} and its $1-\alpha$ quantile do not have an analytic form.
Following recent work on kernel-based tests \citep{Liu16:KSD,Chwialkowski16:KGFT,gretton2012kernel}, we propose to approximate the critical values using the bootstrap method proposed by \citet{Arcones92:Bootstrap,Huskova93:BSConsist}, which was previously used in \citet{Liu16:KSD}.
Specifically, we first draw multinomial random weights $(w_1,\ldots,w_n)\sim\text{Mult}(n; \frac{1}{n},\ldots,\frac{1}{n})$ and compute the bootstrap sample 
$\MH_n^*(\theta) = (1/n^2)\sum_{1 \leq i\neq j \leq n}(w_i - 1)(w_j - 1)h_\theta((x_i,z_i),(x_j,z_j))$.
We then calculate the empirical quantile $\hat{\gamma}_{1-\alpha}$ of $n\MH^*_n(\theta)$.
For degenerate $U$-statistics, $\hat{\gamma}_{1-\alpha}$ is a consistent estimate of $\gamma_{1-\alpha}$ \citep{Arcones92:Bootstrap,Huskova93:BSConsist}.

We summarize our bootstrap kernel conditional moment (KCM) test in Algorithm 1. Note that the proposed test checks the CMR for a \emph{given} parameter $\theta$ and does not take into account the estimation error of $\theta$. We defer a full treatment of interplay between parameter estimation and hypothesis testing to future work.


\section{RELATED WORK}
\label{sec:related-work}

Existing CM tests can generally be categorized into two classes.
The former is based on a transformation of CMR into a continuum of unconditional counterparts, \eg, \citet{Bierens82:ConsistentCM,Bierens90:FuncForm}, \citet{Jong96:Dependence}, \citet{Bierens97:Asymptotic}, and \citet{Donald03:Likelihood} to name a few.
The latter employs nonparametric kernel estimation which includes \citet{Zheng96:NonparamTest,LiWang98:NonparamTest,Fan00:KernelICM} among others.
While both classes lead to consistent tests, they exhibit different asymptotic behaviors; see, \eg, \citet{Fan00:KernelICM,Delgado06:Consistent} for detailed comparisons.

\vspace{-5pt}
\paragraph{A continuum of unconditional moments.}

One of the classical approaches is to find a parametric weighting function $w(x,\eta)$ such that
\begin{equation*}
    \ep[\bpsi(Z;\theta)|X] = \mathbf{0} \text{ a.s.} \;
    \Leftrightarrow \;
    \ep[\bpsi(Z;\theta)w(X,\eta)] = \mathbf{0},
\end{equation*}
\noindent for almost all $\eta\in\Xi\subseteq\rr^m$ where $\eta$ is a nuisance parameter.
\citet{Newey85:Specification} and \citet{Tauchen85:CMT} proposed the so-called M-test using a finite number of weighting functions.
Since it imposes only a finite number of moment conditions, the test cannot be consistent against all possible alternatives and power against specific alternatives depends on the choice of these weighting functions.
\citet{Jong96:Dependence} and \citet{Donald03:Likelihood} showed that this issue can be circumvented by allowing the number of moment conditions to grow with sample size.
Although our KCM test generally relies on infinitely many moment conditions, one can impose finitely many conditions using the finite dimensional RKHS such as those endowed with linear and polynomial kernels or resorting to finite-dimensional kernel approximations.

\citet{Stinchcombe98:Nuisance} showed that there exists a wide range of $w(x,\eta)$ that lead to consistent CM tests.
They call these functions ``totally revealing''.
For instance, \citet{Bierens82:ConsistentCM} proposed the first consistent specification test for nonlinear regression models using $w(x,\eta) = \exp(i\eta^\top x)$ for $\eta\in\rr^d$.
Similarly, \citet{Bierens90:FuncForm} used $w(x,\eta) = \exp(\eta^\top x)$ for $\eta\in\rr^d$.
An indicator function $w(x,\eta) = \mathbbm{1}(\alpha^\top x \leq \beta)$ with $\eta=(\alpha,\beta) \in \mathbb{S}^d\times (-\infty,\infty)$ where $\mathbb{S}^d=\{\alpha\in\rr^d : \|\alpha\|=1\}$ was used in \citet{Escanciano06:Projection} and \citet{Delgado06:Consistent}.
Other popular weighting functions include power series, Fourier series, splines, and orthogonal polynomials, for example.
In light of Theorem \ref{thm:expansion}, the KCM test falls into this category where weighting functions are eigenfunctions associated with the kernel $k$.

Since $w(x,\eta)$ depends on the nuisance parameter $\eta$, \citet{Bierens82:ConsistentCM} suggested to integrate $\eta$ out, resulting in an \emph{integrated conditional moment} (ICM) test statistic:
\begin{equation}
    \label{eq:icm-test}
    \widehat{T}_n(\theta) = \int_{\Xi} \| \widehat{Z}_n(\eta)\|_2^2\dd\nu(\eta),
\end{equation}
where $\Xi$ is a compact subset of $\rr^d$, $\nu(\eta)$ is a probability measure on $\Xi$, and $\widehat{Z}_n(\eta) := (1/\sqrt{n})\sum_i\bpsi(z_i;\theta)w(x_i,\eta)$.
The limiting null distribution of the ICM test was proven to be a zero-mean Gaussian process \citep{Bierens90:FuncForm}.
\citet{Bierens97:Asymptotic} also characterizes the asymptotic null distribution of a general class of real-valued weighting functions.

The following theorem establishes the connection between the KCM and ICM test statistics.

\begin{theorem}\label{thm:kcm-icm}
    Let $k(x,x') = \varphi(x-x')$ be a shift-invariant kernel on $\rr^d$. Then, we have
    $$\MM^2(\theta) = \frac{1}{(2\pi)^{d/2}}\int_{\rr^d} \left\|\ep[\bpsi(Z;\theta)\exp(i\omega^\top X)]\right\|_2^2 \dd\Lambda(\omega)$$
    where $\Lambda$ is a Fourier transform of $k$.
\end{theorem}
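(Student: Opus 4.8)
The plan is to unfold the closed-form expression for $\MM^2(\theta)$ from Theorem~\ref{thm:close-form} and then apply Bochner's theorem to the shift-invariant kernel $k$. First I would write
$$
\MM^2(\theta) = \ep\big[h_\theta((X,Z),(X',Z'))\big]
= \ep\big[\bpsi(Z;\theta)^\top\bpsi(Z';\theta)\,k(X,X')\big],
$$
using \eqref{eq:kernel_h} and the fact that $(X',Z')$ is an independent copy of $(X,Z)$. Next, since $k(x,x')=\varphi(x-x')$ is a continuous shift-invariant positive definite kernel (it is a kernel, hence positive definite, and continuity/boundedness are assumed in \ref{asmp:a4}), Bochner's theorem (Theorem~\ref{thm:bochner}) gives
$$
k(x,x') = \varphi(x-x') = \frac{1}{(2\pi)^{d/2}}\int_{\rr^d} e^{-i(x-x')^\top\omega}\,\dd\Lambda(\omega),
$$
where $\Lambda$ is a finite nonnegative Borel measure. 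Substituting this into the expression for $\MM^2(\theta)$ yields a triple integral over $\omega$ and the two independent copies of $(X,Z)$.

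The key step is then to exchange the order of integration (Fubini–Tonelli) so that the $\omega$-integral is outermost, and to recognize that the integrand factorizes across the two independent copies once $e^{-i(x-x')^\top\omega} = e^{-ix^\top\omega}\,\overline{e^{-i(x')^\top\omega}}$ (equivalently $e^{-ix^\top\omega}e^{i(x')^\top\omega}$) is inserted. Concretely, after swapping I would obtain
$$
\MM^2(\theta) = \frac{1}{(2\pi)^{d/2}}\int_{\rr^d}
\ep\!\left[\bpsi(Z;\theta)^\top\bpsi(Z';\theta)\,e^{-iX^\top\omega}e^{iX'^\top\omega}\right]\dd\Lambda(\omega),
$$
and by independence of $(X,Z)$ and $(X',Z')$ the inner expectation splits as
$$
\ep\!\left[\bpsi(Z;\theta)e^{-iX^\top\omega}\right]^\top\,\overline{\ep\!\left[\bpsi(Z;\theta)e^{-iX^\top\omega}\right]}
= \big\|\ep[\bpsi(Z;\theta)e^{i\omega^\top X}]\big\|_2^2,
$$
where the complex conjugate turns the product into a squared Euclidean norm of the ($\mathbb{C}^q$-valued) vector $\ep[\bpsi(Z;\theta)e^{i\omega^\top X}]$ (the sign in the exponent is immaterial inside $\|\cdot\|_2^2$ since $\|\overline{v}\|_2 = \|v\|_2$, which also lets me match the statement's $e^{i\omega^\top X}$). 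This gives exactly the claimed identity.

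The main obstacle is justifying the interchange of integration and expectation. I would verify the absolute integrability needed for Fubini: since $\Lambda$ is finite, $|e^{-i(x-x')^\top\omega}| = 1$, and $\ep|\bpsi(Z;\theta)^\top\bpsi(Z';\theta)| \le \ep[\|\bpsi(Z;\theta)\|_2\|\bpsi(Z';\theta)\|_2] = (\ep\|\bpsi(Z;\theta)\|_2)^2$, which is finite under the moment/regularity assumptions (indeed $\ep[h_\theta((X,Z),(X,Z))]<\infty$ as assumed in Theorem~\ref{thm:close-form}, combined with boundedness of $k$, controls $\ep\|\bpsi(Z;\theta)\|_2^2$ via $k(x,x)$ bounded below away from zero is not needed; rather $\ep[\bpsi^\top\bpsi\,k(X,X)]<\infty$ and $k$ bounded give $\ep\|\bpsi\|_2^2<\infty$). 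Hence the joint integrand is absolutely integrable with respect to the product measure $P_{XZ}\otimes P_{X'Z'}\otimes\Lambda$, and Fubini applies. A minor secondary point is bookkeeping the complex conjugate correctly so that the real-valued left side matches a manifestly nonnegative right side; I would note that $\MM^2(\theta)$ is real, the imaginary parts cancel by the symmetry $\omega\mapsto-\omega$ of $\Lambda$ (as $\varphi$ is real-valued and even), and the surviving real part is precisely $\|\ep[\bpsi(Z;\theta)e^{i\omega^\top X}]\|_2^2$ integrated against $\Lambda$.
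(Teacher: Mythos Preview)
Your proposal is correct and follows essentially the same route as the paper: start from the closed-form $\MM^2(\theta)=\ep[\bpsi(Z;\theta)^\top\bpsi(Z';\theta)k(X,X')]$, insert Bochner's representation of $\varphi(x-x')$, factor $e^{-i(X-X')^\top\omega}=e^{-i\omega^\top X}e^{i\omega^\top X'}$, swap integrals, and use independence to split the expectation into a squared norm. You are in fact more careful than the paper, which performs the interchange without comment; your Fubini justification is fine (and simplifies further since for a shift-invariant kernel $k(x,x)=\varphi(0)$ is a positive constant, so $\ep[h_\theta((X,Z),(X,Z))]<\infty$ directly gives $\ep\|\bpsi(Z;\theta)\|_2^2<\infty$).
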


This theorem is quite insightful as it describes the KCM test statistic as the ICM test statistic $\widehat{T}_n(\theta)$ of \citet{Bierens82:ConsistentCM} where the distribution on the nuisance parameter $\omega$ is a Fourier transform of the kernel.
For instance, the Gaussian kernel $k(x,x')=\exp(-\|x-x'\|_2^2/2\sigma^2)$ corresponds to the Gaussian density $\Lambda(\omega) = \exp(-\sigma^2\|\omega\|_2^2/2)$; see \citet[Table 2.1]{Muandet17:KME} for more examples.
Note that both weighting functions and integrating measures are implicitly determined by the kernel $k$.
Unlike ICM tests, KCM tests can be evaluated without solving the high-dimensional numerical integration \eqref{eq:icm-test} explicitly.
Moreover, KCM tests can be easily generalized to $\inx$ that is not necessarily a subset of $\rr^d$. 

\citet{Carrasco00:Continuum} also considers a similar setting that involves a continuum of moment conditions in RKHS.
Their approach, however, differs significantly from ours. 
First, they consider a specific case where the Hilbert space is a set of square integrable functions of a scalar $t\in[0,T]$ with the unconditional moment conditions $\ep[\bpsi_t(X,\theta_0)] = \mathbf{0}$ for all $t\in [0,T]$.
Second, their key question is to identify the optimal choice of weighting matrix in GMM. 
Third, estimation is actually based on a truncation of infinite moment conditions.
Lastly, they also proposed the CM test similar to the ICM tests, but it can handle only the case with $Z\in\rr$, while our test is applicable to any domain with a valid kernel.

\vspace{-5pt}
\paragraph{Nonparametric kernel estimation.}

The second class of tests, known as \emph{smooth tests} \citep{Zheng96:NonparamTest,LiWang98:NonparamTest,Fan00:KernelICM}, adopts the statistic of the form
\begin{equation}\label{eq:kde-test}
    T(\theta) = \ep[\bpsi(Z;\theta)^\top\ep[\bpsi(Z;\theta)|X]f(X)].
\end{equation} 
Based on the kernel estimator of $\ep[\bpsi(Z;\theta)|X]f(X)$, the empirical estimate of \eqref{eq:kde-test} can be expressed as
\begin{equation}\label{eq:kde-emp}
    \widehat{T}_n(\theta) = \frac{1}{n(n-1)h^d}\sum_{1\leq i\neq j \leq n}\bpsi(z_i;\theta)^\top\bpsi(z_j;\theta)K_{ij}
\end{equation}
\noindent where $K_{ij} = K((x_i - x_j)/h)$, $K(\cdot):\rr^d\to\rr$ is a normalized kernel function and $h$ is a smoothing parameter.
Here, we emphasize that existing smooth tests rely on the kernel density estimator (KDE) in which the kernel used is not necessarily a reproducing kernel.   
For the smooth test to be consistent, $h$ must vanish as $n\to\infty$, whereas our KCM test is consistent even when the kernel is fixed.
Nevertheless, if $K(\cdot)$ is a reproducing kernel, the test statistic $\widehat{T}_n(\theta)$ with a fixed smoothing parameter $h$ resembles the KCM test statistic \eqref{eq:estimator}.
In fact, \citet{Fan00:KernelICM} has shown that the ICM test is a special case of the kernel-based test with a fixed smoothing parameter.
However, the critical drawback of the nonparametric kernel-based tests is that they have non-trivial power only against local alternatives that approach the null at a slower rate than $1/\sqrt{n}$, due to the slower rate of convergence of kernel density estimators, \ie, $O((nh^{d/2})^{-1/2})$ as $h\to 0$ \citep{Fan00:KernelICM}.
Moreover, these tests are susceptible to the curse of dimensionality.

Last but not least, the kernel estimator is also a key ingredient in empirical likelihood-based CM tests  \citep{Tripathi03:CMRTest,Kitamura04:EL,Dominguez04:GMM}.


\begin{figure*}[t!]
    \centering
    \begin{subfigure}[t]{0.33\textwidth}
        \centering
        \includegraphics[width=0.97\textwidth]{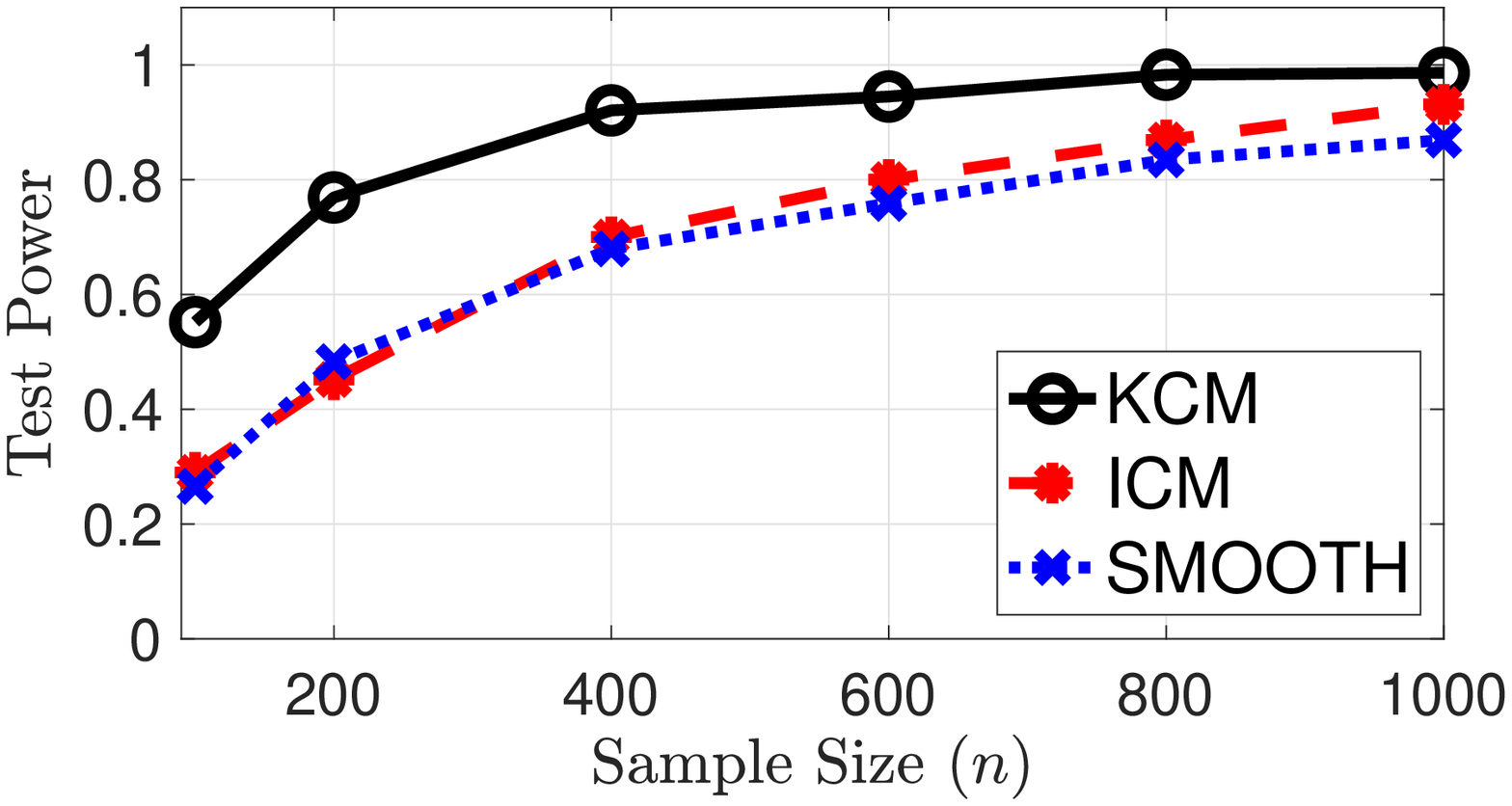}
        \subcaption{\texttt{REG-HOM} ($\delta=0.01$)}\label{fig:reg-hom-n}
    \end{subfigure}%
    \hfill
    \begin{subfigure}[t]{0.33\textwidth}
        \centering
        \includegraphics[width=0.97\textwidth]{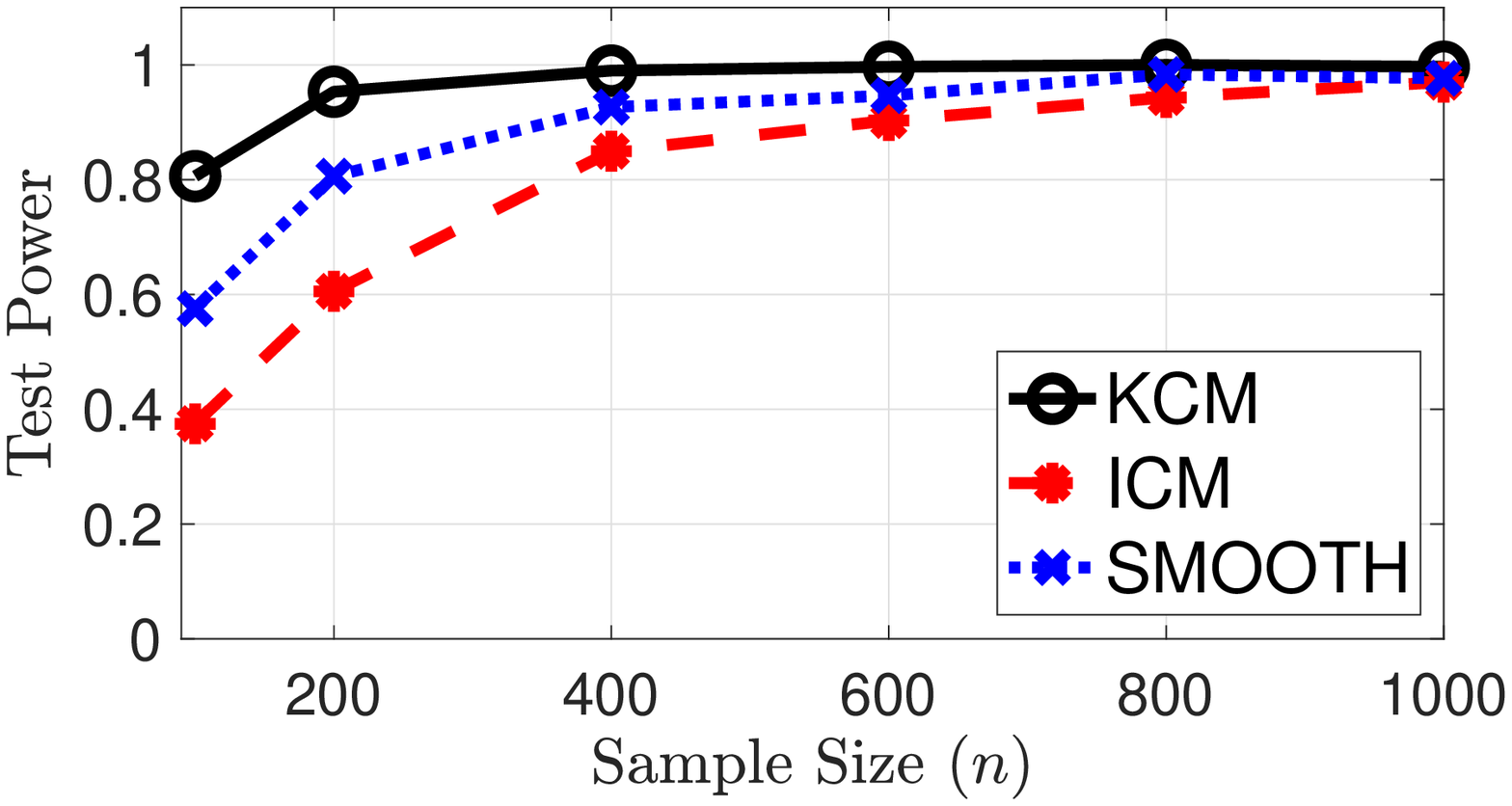}
        \subcaption{\texttt{REG-HET} ($\delta=0.01$)}\label{fig:reg-het-n}
    \end{subfigure}%
    \hfill
    \begin{subfigure}[t]{0.33\textwidth}
        \centering
        \includegraphics[width=0.97\textwidth]{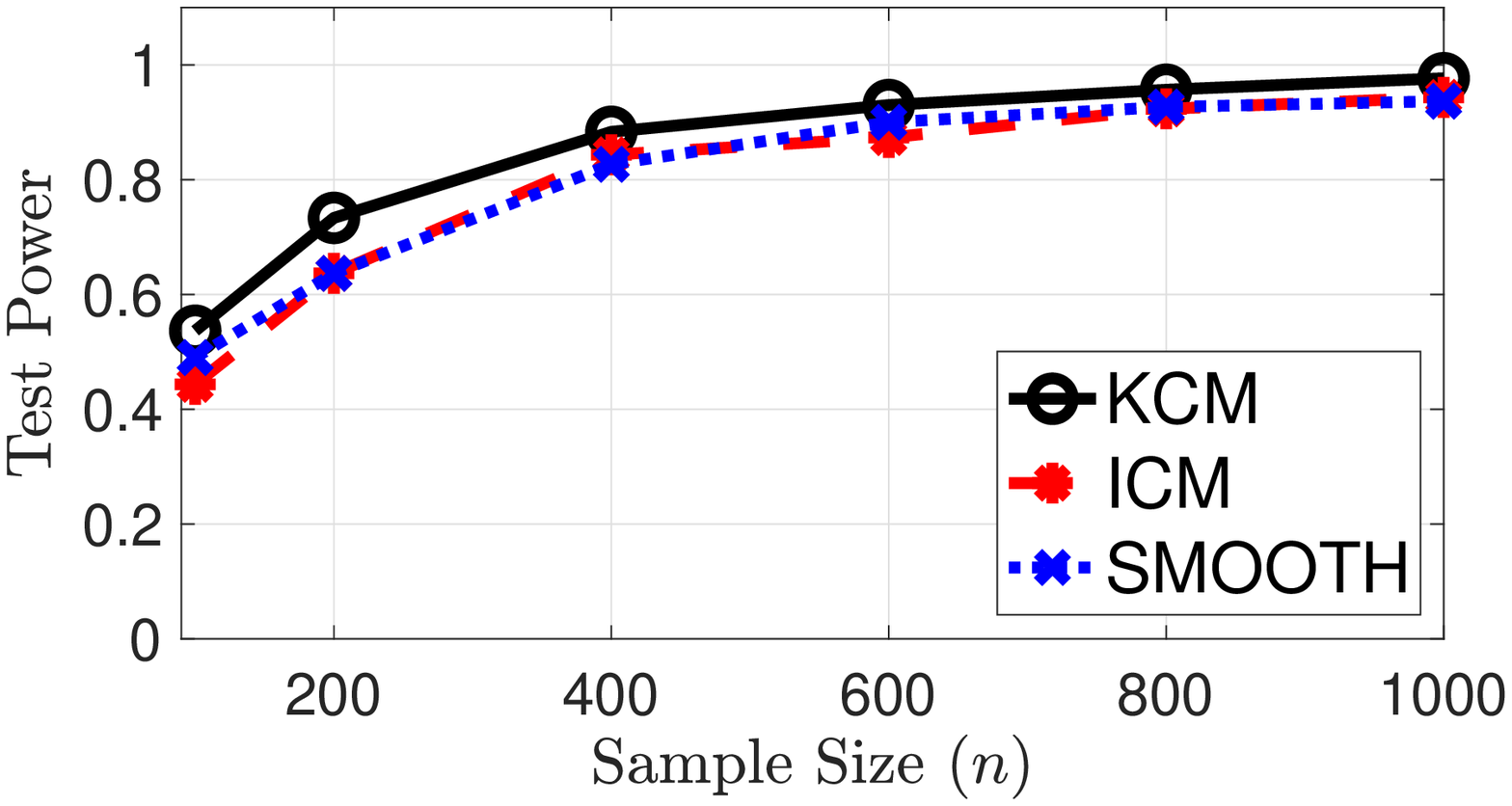}
        \subcaption{\texttt{SIMEQ} ($\delta=0.01$)}\label{fig:simeq-n}
    \end{subfigure}
    \vspace{2.5em}
    \begin{subfigure}[t]{0.33\textwidth}
        \centering
        \includegraphics[width=0.97\textwidth]{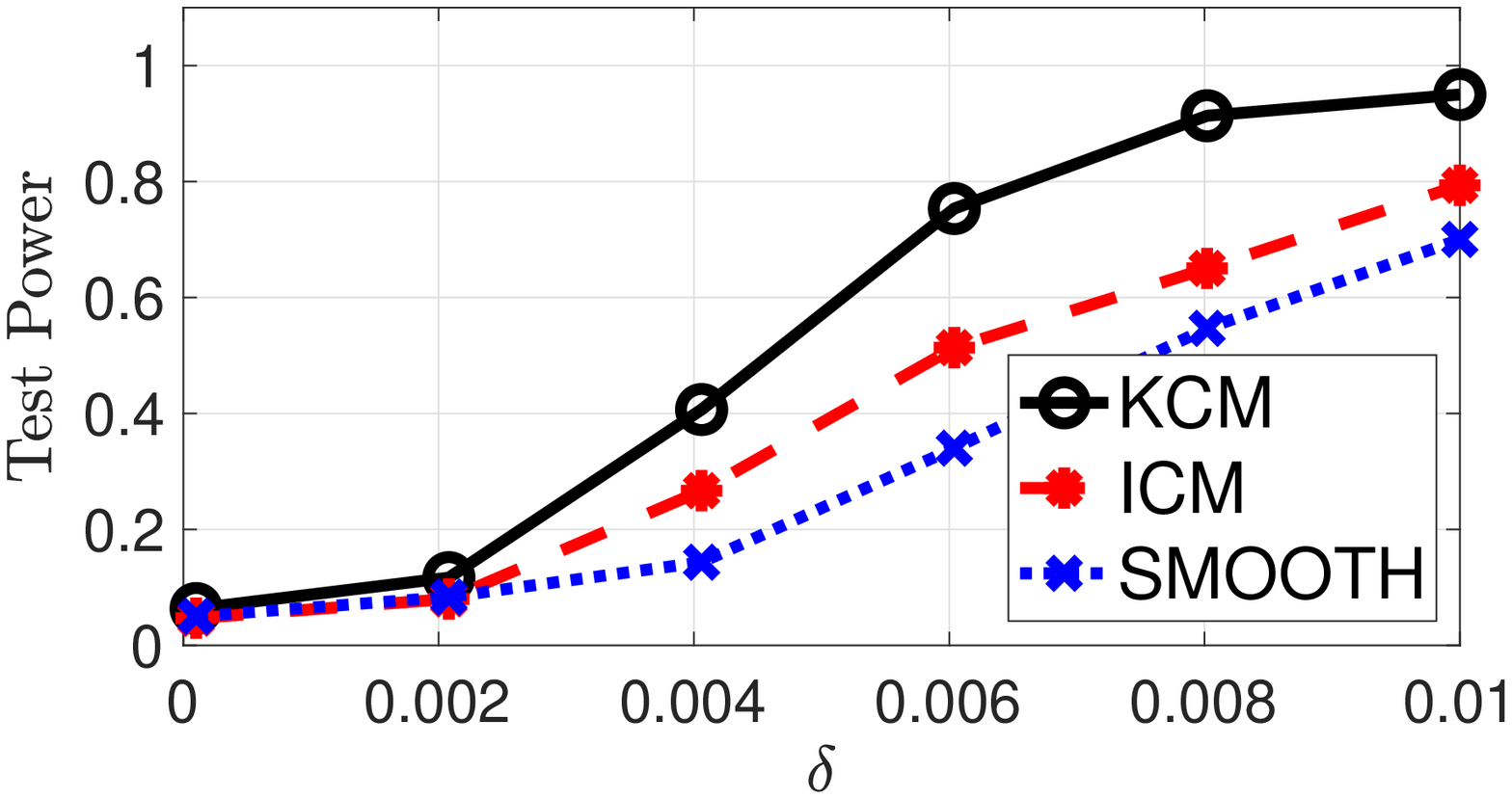}
        \subcaption{\texttt{REG-HOM} ($n=500$)}\label{fig:reg-hom-d}
    \end{subfigure}%
    \begin{subfigure}[t]{0.33\textwidth}
        \centering
        \includegraphics[width=0.97\textwidth]{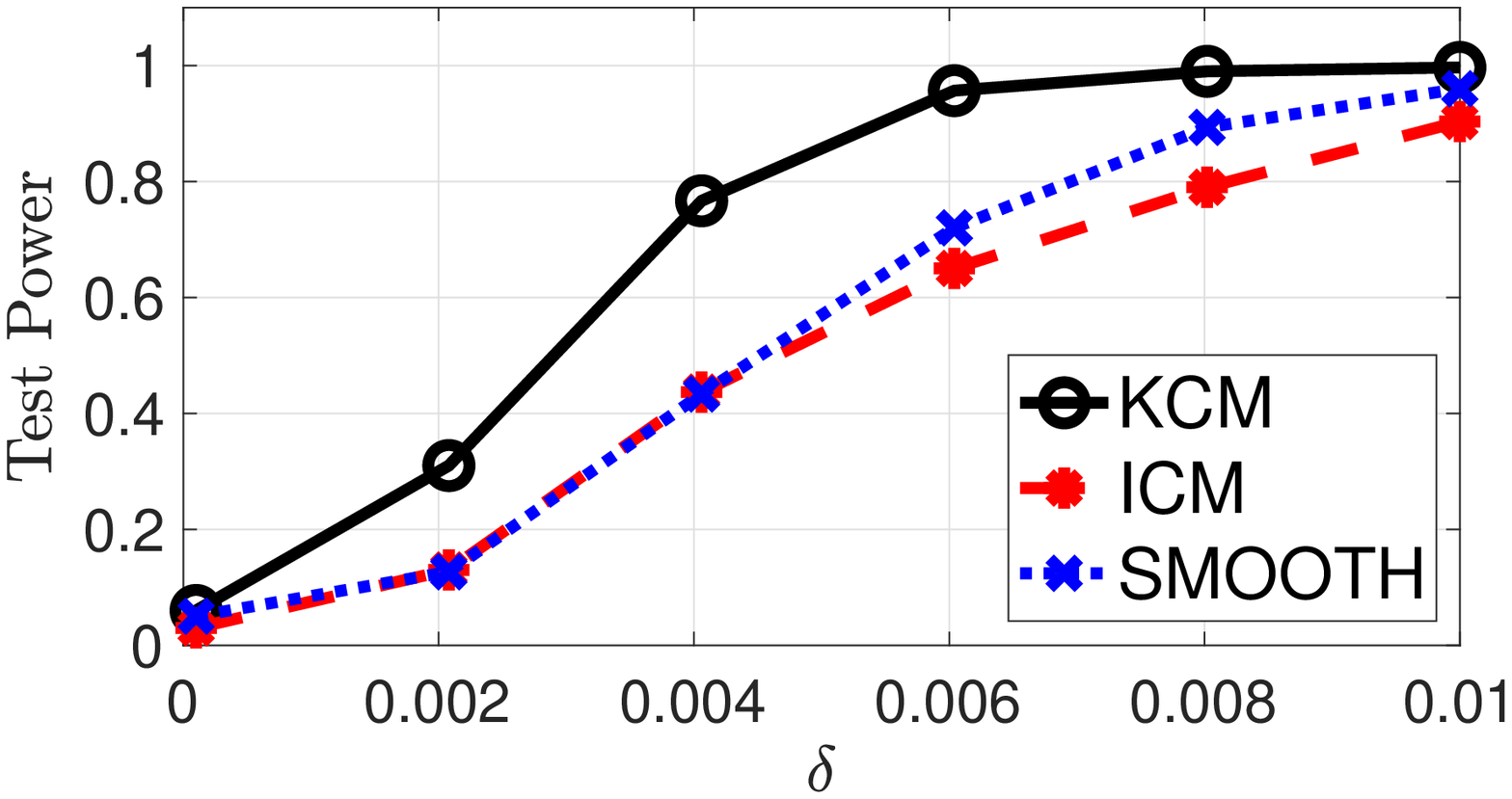}
        \subcaption{\texttt{REG-HET} ($n=500$)}\label{fig:reg-het-d}
    \end{subfigure}%
    \begin{subfigure}[t]{0.33\textwidth}
        \centering
        \includegraphics[width=0.97\textwidth]{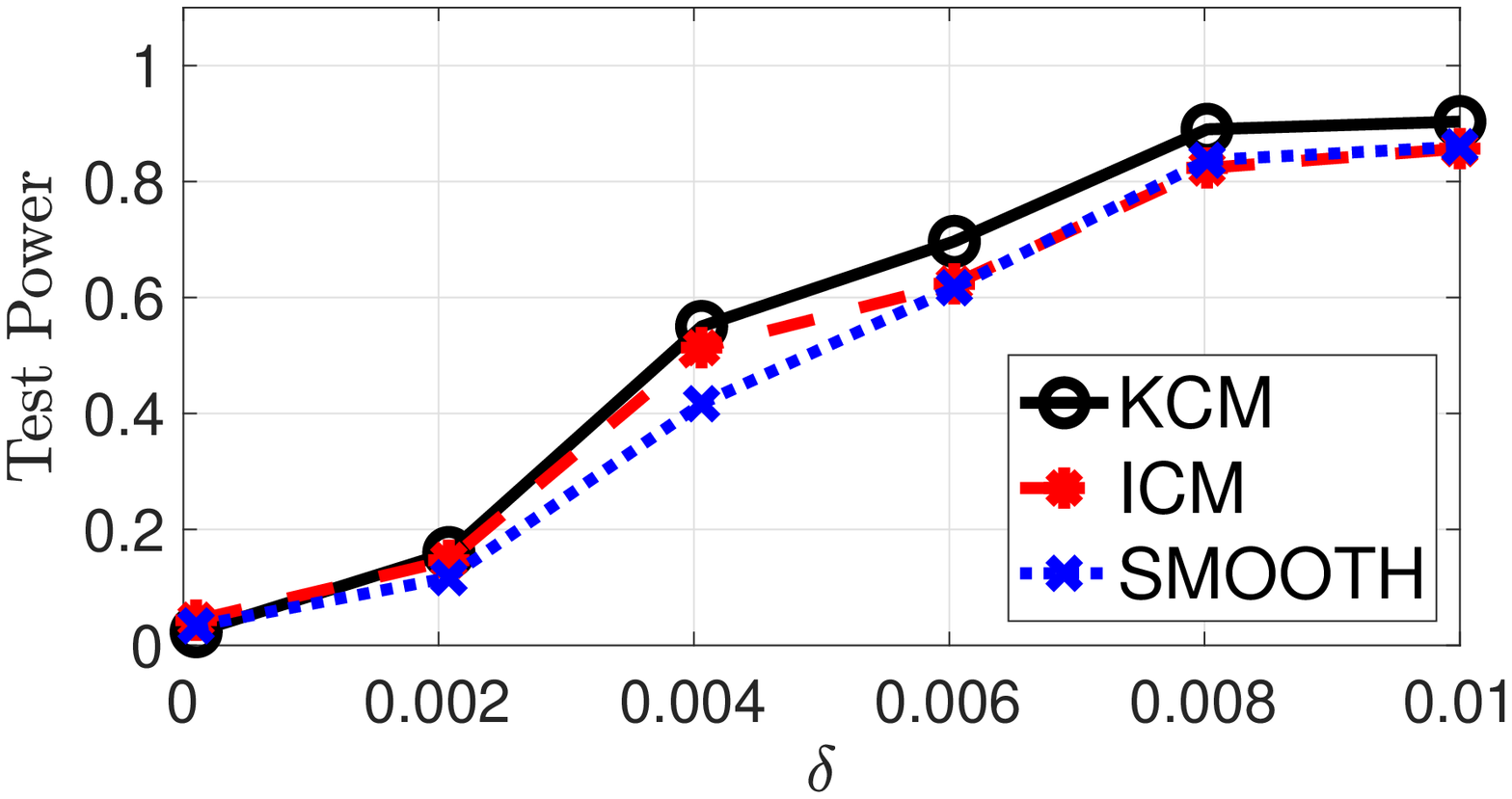}
        \subcaption{\texttt{SIMEQ} ($n=500$)}\label{fig:simeq-d}
    \end{subfigure}%
    \caption{The test powers of KCM, ICM, and smooth tests averaged over 300 trials as we vary the values of $n$ (top) and $\delta$ (bottom). Type-I errors of these tests are shown in Figure \ref{fig:results-typei} in Appendix \ref{sec:type1-error}. 
    See main text for the interpretation.}
    \label{fig:results}
\end{figure*}


\vspace{-5pt}
\paragraph{Kernelized Stein discrepancy (KSD).}

Stein's methods \citep{Stein72:Stein} are among the most popular techniques in statistics and machine learning.
One notable example is the Stein discrepancy which aims to characterize complex, high-dimensional distribution $p(x) = \tilde{p}(x)/N$ with intractable normalization constant $N = \int \tilde{p}(x)\dd x$ using a \emph{Stein operator} $\mathcal{A}_p$ such that
\begin{equation}\label{eq:stein-iden}
    p = q \quad \Leftrightarrow \quad \ep_{x\sim q}[\mathcal{A}_p f(x)] = 0, \;\; \forall f,
\end{equation}
\noindent where $\mathcal{A}_p f(x) := \nabla_x\log p(x)f(x) + \nabla_{x} f(x)$. 
Here, we assume for simplicity that $x\in\rr$.
The Stein operator $\mathcal{A}_p$ depends on the density $p$ through its \emph{score function} $s_p(x) := \nabla_x\log p(x) = \frac{\nabla_x p(x)}{p(x)}$, which is independent of $N$.
When $p\neq q$, the expectation in \eqref{eq:stein-iden} gives rise to a discrepancy 
\begin{eqnarray}\label{eq:stein-disc}
   \mathbb{S}_f(p,q) &:=& \ep_{x\sim q}[\mathcal{A}_p f(x)] \nonumber \\
   &=& \ep_{x\sim q}[(s_p(x) - s_q(x))f(x)].
\end{eqnarray}
See, also, \citet[Lemma 2.3]{Liu16:KSD}. The Stein discrepancy has led to numerous applications such as variance reduction \citep{Oates17:Control} and goodness-of-fit testing \citep{Liu16:KSD,Chwialkowski16:KGFT}, among others.

Like \eqref{eq:uncond-moments}, we can observe that \eqref{eq:stein-iden} is indeed a set of unconditional moment conditions.
To make an explicit connection between Stein discrepancy and CMR, we need to assume access to the probability densities.
Let $\mathcal{P}_{\Theta}$ be a space of probability densities $p(z; \theta)$ such that $\theta \mapsto p(z;\theta)$ is injective. 
We choose $\bpsi(z;\theta) = \nabla_z \log p(z;\theta) =: s_{\theta}(z)$ as the associated score function.\footnote{This differs from the standard definition of score function as $\nabla_\theta \log p(z|\theta)$ in the interpretation of maximum likelihood as generalized method of moments \citep{Hall05:GMM}.} 
This yields the following CMR:
\begin{equation}\label{eq:stein-cmr}
\ep[\nabla_z\log p(Z;\theta_0)\,|\,X] = \mathbf{0},\quad P_X\text{-a.s.}
\end{equation}
For any $\theta\in\Theta$, it follows that 
$\ep[\bpsi(Z;\theta)^\top f(X)] = \ep[s_{\theta}(Z)^\top f(X) - s_{\theta_0}(Z)^\top f(X)] = \ep[(s_{\theta}(Z) - s_{\theta_0}(Z))^\top f(X)] =: \Delta_f(\theta,\theta_0)$.
While $\Delta_f(\theta,\theta_0)$ resembles the Stein discrepancy in \eqref{eq:stein-disc}, we highlight the key differences.
First, this characterization requires that the model is correctly specified, \ie, $p(z;\theta_0)$ is observationally indistinguishable from the underlying data distribution.
Second, like the Stein discrepancy, it can be interpreted as the $f(x)$-weighted expectation of the score difference $s_{\theta} - s_{\theta_0}$. In contrast, the weighting function $f(x)$ in our setting depends only on $X$, which is a subvector of $Z$. 
We provide further discussion about this discrepancy measure in Appendix \ref{sec:cmd}.
The following theorem follows directly from the preceeding observation.
\begin{theorem}\label{thm:stein}
    Let $\mathcal{P}_{\Theta}$ be a space of probability densities $p(z; \theta)$. 
    Assume that $\theta \mapsto p(z;\theta)$ is injective and $\theta_0\in\Theta$. 
    If $\bpsi(z;\theta) = \nabla_z \log p(z;\theta)$ and $X=Z$, we have $\mathbb{S}_f(p(z;\theta),p(z;\theta_0)) = \Delta_f(\theta,\theta_0)$.
\end{theorem}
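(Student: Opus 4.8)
The plan is to recognize that Theorem~\ref{thm:stein} is essentially a definitional identity, so the work lies in carefully unfolding the two sides and checking that the Stein operator $\mathcal{A}_p$ applied to $p=p(z;\theta)$ and $q=p(z;\theta_0)$ produces exactly the object $\Delta_f(\theta,\theta_0)$ when $X=Z$. First I would recall from \eqref{eq:stein-disc} that, with $p = p(z;\theta)$ and $q = p(z;\theta_0)$, the Stein discrepancy reads $\mathbb{S}_f(p(z;\theta),p(z;\theta_0)) = \ep_{z\sim p(z;\theta_0)}[(s_\theta(z) - s_{\theta_0}(z))f(z)]$, where $s_\theta(z) = \nabla_z \log p(z;\theta)$. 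This uses Stein's identity for $q=p(z;\theta_0)$, namely $\ep_{z\sim q}[\mathcal{A}_q f(z)] = 0$ under suitable boundary/integrability conditions, to cancel the $\nabla_z f$ term; this is precisely the content cited as \citet[Lemma 2.3]{Liu16:KSD}, which I would invoke rather than reprove.

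Next I would turn to the right-hand side. With the choice $\bpsi(z;\theta) = \nabla_z\log p(z;\theta) = s_\theta(z)$, Lemma~\ref{lem:optimal} gives $\ep[\bpsi(Z;\theta_0)\,|\,X] = \mathbf{0}$ a.s.\ (this is the CMR \eqref{eq:stein-cmr}), hence $\ep[s_{\theta_0}(Z)^\top f(X)] = \ep[\,\ep[s_{\theta_0}(Z)\,|\,X]^\top f(X)\,] = 0$ by the law of iterated expectation. Therefore, for any $\theta\in\Theta$ and any instrument $f$,
\begin{equation*}
    \ep[\bpsi(Z;\theta)^\top f(X)] = \ep[s_\theta(Z)^\top f(X)] = \ep[(s_\theta(Z) - s_{\theta_0}(Z))^\top f(X)] = \Delta_f(\theta,\theta_0),
\end{equation*}
where the expectation on the left is over $Z\sim P_Z$, which under the null \eqref{eq:stein-cmr} has the same law as $p(z;\theta_0)$. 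Specializing to $X = Z$, the weighting function $f$ now depends on all of $Z$, so $\Delta_f(\theta,\theta_0) = \ep_{z\sim p(z;\theta_0)}[(s_\theta(z) - s_{\theta_0}(z))^\top f(z)]$, which is exactly the expression for $\mathbb{S}_f(p(z;\theta),p(z;\theta_0))$ obtained in the previous step. Comparing the two yields the claimed equality.

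The main obstacle, such as it is, is bookkeeping around two points rather than any deep argument. The first is the regularity/boundary condition needed to apply Stein's identity (so that $\ep_{z\sim q}[\nabla_z f(z)] = -\ep_{z\sim q}[s_q(z) f(z)]$); here I would simply state that we assume the densities $p(z;\theta)$ are differentiable with the standard vanishing-tail conditions under which \citet[Lemma 2.3]{Liu16:KSD} holds, and cite it. The second is making explicit that under the null the data distribution $P_Z$ and the model $p(z;\theta_0)$ are interchangeable in the expectations --- this is exactly the "correctly specified" caveat flagged in the text before the theorem, and it is the sole place where the injectivity of $\theta\mapsto p(z;\theta)$ and the assumption $\theta_0\in\Theta$ enter (they guarantee $\theta_0$ is the genuine data-generating parameter so that $\ep[\,\cdot\,] = \ep_{z\sim p(z;\theta_0)}[\,\cdot\,]$). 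No heavy computation is required; the proof is three short displayed identities glued together.
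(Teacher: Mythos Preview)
Your proposal is correct and matches the paper's own treatment: the paper does not even provide a separate formal proof of this theorem, stating only that it ``follows directly from the preceding observation,'' and your write-up is precisely a careful unpacking of that observation (the definition \eqref{eq:stein-disc} of $\mathbb{S}_f$, the CMR \eqref{eq:stein-cmr} at $\theta_0$, and the specialization $X=Z$). The extra bookkeeping you flag about Stein's identity regularity and correct specification is appropriate and goes slightly beyond what the paper makes explicit, but the core argument is identical.
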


Mostly related to our work are the RKHS-based Stein's methods \citep{Liu16:KSD,Chwialkowski16:KGFT}.
Specifically, if we assume the conditions of Theorem \ref{thm:stein} and that $f$ belongs to the RKHS, it follows that $\Delta(\theta,\theta_0) := \sup_f \|\Delta_f(\theta,\theta_0)\|_2$ coincides with the kernelized Stein discrepancy (KSD) proposed in \citet{Liu16:KSD} and \citet{Chwialkowski16:KGFT}.
We will elaborate on this connection in further detail in future work.

\section{EXPERIMENTS} 
\label{sec:experiments}

We report the finite-sample performance of the KCM test against two well-known consistent CM tests, namely ICM test and smooth test, as discussed in Section \ref{sec:related-work}.
We evaluate all tests with a bootstrap size $B=1000$ and a significance level $\alpha=0.05$.

\begin{enumerate}[label=\textnormal{(\arabic*)},noitemsep]
    \item \texttt{\bfseries KCM}: The bootstrap KCM test using $U$-statistic in Algorithm \ref{alg:kcm}.
    We use the RBF kernel with bandwidth chosen by the median heuristic.
    
    \item \texttt{\bfseries ICM}: The test based on an integration over weighting functions. 
    Following \citet{Stute97:ModelCheck} and \citet{Delgado06:Consistent}, we use \eqref{eq:icm-test} as the test statistic with $w(x,\eta) = \mathbbm{1}(x \leq \eta) = \prod_{j=1}^d\mathbbm{1}(x_j\leq \eta_j)$ where $\mathbbm{1}(\cdot)$ is an indicator function.
    The density $\nu$ is chosen to be the empirical distribution of $X$.
    This leads to a simple test statistic $t_n = \sum_{i=1}^n r_n(x_i)^\top r_n(x_i)$ where $r_n(x) := \frac{1}{n}\sum_{i=1}^n\bpsi(z_i;\theta)\mathbbm{1}(x_i \leq x)$.
    We follow the bootstrap procedure in \citet[Sec. 4.3]{Delgado06:Consistent} to compute the critical values. 
    
    \item \texttt{\bfseries Smooth}: The test based on nonparametric kernel estimation. 
    We use \eqref{eq:kde-emp} as the test statistic. 
    The kernel is the standard Gaussian density function whose bandwidth is chosen by the rule-of-thumb $h=n^{-1/5}$.
    Note that the median heuristic is not applicable here because the bandwidth $h$ does not vanish, as required.
    The critical values are obtained using the same bootstrap procedure as in \citet[Sec. 4.2]{Delgado06:Consistent}. 
\end{enumerate}

\vspace{-5pt} 
\paragraph{Testing a regression function (\texttt{REG}).}
We follow a similar simulation of regression model used in \citet{Lavergne16:Hausman}.
In this setting, for a given estimate $\hat{\bm\beta}$ of the regression parameters, the null hypothesis is 
$$H_0: \ep[Y - \hat{\bm{\beta}}^\top X\,|\, X] = 0 \quad \text{a.s.}$$
where $X \in \rr^d$ and $Y$ is a univariate random variable, \ie, $Z=(Y,X)$.
The data are generated from the data generating process (DGP):
\begin{equation*}
    Y = \bm{\beta}_0^\top X + e .
\end{equation*}
We set $\bm{\beta}_0 = \mathbf{1}$, and $X\sim\mathcal{N}(0,I_d)$.
For the error term $e$, we consider two scenarios:
\begin{enumerate*}[label=(\roman*)]
    \item \emph{Homoskedastic} (\texttt{HOM}): $e = \epsilon, \epsilon \sim \mathcal{N}(0,1)$ and 
    \item \emph{Heteroskedastic} (\texttt{HET}): $e = \epsilon\sqrt{0.1+0.1\|X\|_2^2}$.
\end{enumerate*}
In each trial, we obtain an estimate of $\bm{\beta}_0$ by $\hat{\bm{\beta}} = \bm{\beta}_0 + \gamma$ where $\gamma \sim \mathcal{N}(\mathbf{0},\delta^2 I_{d})$.
In this experiment, we set $d=5$.
When $\delta=0$, the CMR are fulfilled, whereas they are violated, \ie, $H_0$ is false, if $\delta \neq 0$.
Different values of $\delta$ correspond to different degrees of deviation from the null.

\vspace{-5pt}
\paragraph{Testing the simultaneous equation model (\texttt{SIMEQ}).}

Following \citet{Newey90:Instrumental} and \citet{Delgado06:Consistent}, we consider the equilibrium model
\begin{align*}
    Q &= \alpha_d P + \beta_d R + U, \quad \alpha_d < 0, && \text{(Demand)} \\
    Q &= \alpha_s P + \beta_s W + V, \quad \alpha_s > 0, && \text{(Supply)}
\end{align*}
where $Q$ and $P$ denote quantity and price, respectively, $R$ and $W$ are exogeneous variables, and $U$ and $V$ are the error terms. 
In this setting, $Z = (Q,P,R,W)$ and $X = (R,W)$.
The null hypothesis can be expressed as
\begin{equation*}
    H_0: \ep\left[\begin{array}{c|c}
         Q - \alpha_d P - \beta_d R & \multirow{2}{*}{X}  \\
         Q - \alpha_s P - \beta_s W &   
    \end{array}\right] = 
    \left[\begin{array}{c}
         0  \\
         0 
    \end{array}\right]
\end{equation*}
a.s.~for some $\theta_0 = (\alpha_d,\beta_d,\alpha_s,\beta_s)$.
We generate data according to $Q = \lambda_{11} R + \lambda_{12}W + V_1$ and $P = \lambda_{21} R + \lambda_{22}W + V_{2}$ where $R$ and $W$ are independent standard Gaussian random variables while $V_1$ and $V_2$ are correlated standard Gaussian random variables with $10^{-3}$ variance and $10^{-3}/\sqrt{2}$ covariance, and independent of $(R,W)$.
We set $(\lambda_{11},\lambda_{12},\lambda_{21},\lambda_{22}) = (1,-1,1,1)$ and provide the details on how to find the true parameters $\theta_0$ in Appendix \ref{sec:sem}.
The estimate $\hat{\theta}$ is obtained as in the previous experiment.
The null hypothesis corresponds to $\delta=0$ and different values of $\delta$ corresponds to alternative hypotheses. 
Rejecting $H_0$ means that the functional form of the supply and demand curves are misspecified.

Figure \ref{fig:results} depicts the empirical results for $n\in \{1,2,4,6,8,10\}\times 10^2$ and $\delta\in\{10^{-4},2\times 10^{-3},4\times 10^{-3},6\times 10^{-3},8\times 10^{-3},10^{-2}\}$.
First, it can be observed that KCM, ICM, and smooth tests are all capable of detecting the misspecification as the sample size and $\delta$ are sufficiently large.
Second, the KCM test tends to outperform both ICM and smooth tests in terms of the test power, especially in a low sample regime (see Figure \ref{fig:reg-hom-n}--\ref{fig:simeq-n}) and a small deviation regime (see Figure \ref{fig:reg-hom-d}--\ref{fig:simeq-d}).
In addition, the smooth test and the ICM test are competitive: there is no substantial evidence to conclude that one is always better than the other. 
Lastly, Figure \ref{fig:results-typei} in Appendix \ref{sec:type1-error} depicts that the Type-I errors of all tests are correctly controlled at $\alpha = 0.05$.

Lastly, we point out that this work does not elaborate on the effect of parameter estimation.
In practice, the candidate parameter $\hat{\theta}$ has to be estimated from the observed data, which changes the asymptotic distribution of the test statistic.
We envision the interplay between parameter estimation and hypothesis testing as an important arena for future work.

\section{CONCLUSION} 
\label{sec:conclusion}

To conclude, we propose a new conditional moment test called the KCM test whose statistic is based on a novel representation of the conditional moment restrictions in a reproducing kernel Hilbert space. 
This representation captures all necessary information about the original conditional moment restrictions.
Hence, the resulting test is consistent against all fixed alternatives, is easy to use in practice, and also has connections to existing tests in the literature.
It also has an encouraging finite-sample performance compared to those tests.
While the conditional moment restrictions have a long history in econometrics and so does the concept of reproducing kernel Hilbert spaces in machine learning, the intersection of these concepts remains unexplored.
We believe that this work gives rise to a new and promising framework for conditional moment restrictions which constitute numerous applications in econometrics, causal inference, and machine learning.





\subsubsection*{Acknowledgements}

We thank the anonymous reviewers for the helpful comments on our initial draft. 
KM is indebted to Motonobu Kanagawa and Junhyung Park for fruitful discussion.

{\small
\bibliography{refs}
\bibliographystyle{abbrvnat}
}

\newpage

\appendix
\onecolumn
\doparttoc \faketableofcontents
\addcontentsline{toc}{section}{Appendix} 
\part{Appendix} 
\parttoc 

\section{Conditional Moment Discrepancy (CMMD)}
\label{sec:cmd}

The maximum moment restriction (MMR) also allows us to compare two different models based on the conditional moment restriction (CMR).
Let $\mathcal{M}_{\theta_1}$ and $\mathcal{M}_{\theta_2}$ be two models parameterized by $\theta_1,\theta_2\in\Theta$, respectively.
Then, we can define a CMR-based discrepancy measure between these two models as follows.

\begin{definition}
    For $\theta_1,\theta_2\in\Theta$, a conditional moment discrepancy (CMMD) is defined as 
    $\Delta(\theta_1,\theta_2) := \| \mut_{\theta_1} - \mut_{\theta_2}\|_{\hbspf^p}.$
\end{definition}

By Theorem \ref{thm:identifiability}, $\Delta(\theta_1,\theta_2) \geq 0$ and $\Delta(\theta_1,\theta_2)=0$ if and only if the two models $\mathcal{M}_{\theta_1}$ and $\mathcal{M}_{\theta_2}$ are indistinguishable in terms of the CMR alone.  
Moreover, if the global identifiability \ref{asmp:a3} holds, $\Delta(\theta_0,\theta) = \MM(\theta)$ for all $\theta\in\Theta$.
Since 
\begin{equation*}
   \Delta(\theta_1,\theta_2) = \left\|\ep[\bm{\xi}_{\theta_1}(X,Z) - \bm{\xi}_{\theta_2}(X,Z)] \right\|_{\hbspf^q} = \left\|\ep[\bar{\bm{\xi}}(X,Z)] \right\|_{\hbspf^q}
\end{equation*}
where $\bar{\bm{\xi}}(x,z) := \bm{\xi}_{\theta_1}(x,z)-\bm{\xi}_{\theta_2}(x,z) = (\bpsi(z;\theta_1) - \bpsi(z;\theta_2))k(x,\cdot)$, the CMMD can be viewed as the MMR defined on a \emph{differential residual function} $\bpsi(z;\theta_1)-\bpsi(z;\theta_2)$.
As a result, $\Delta(\theta_1,\theta_2)$ also has a closed-form expression similar to that in Theorem \ref{thm:close-form}.

\begin{corollary}
    For $\theta_1,\theta_2\in\Theta$, let $$h((x,z),(x',z')) := (\bpsi(z;\theta_1)-\bpsi(z;\theta_2))^\top(\bpsi(z';\theta_1) - \bpsi(z';\theta_2))k(x,x')$$ and assume that
    $\ep[h((X,Z),(X,Z))] < \infty$. Then, we have 
    $\Delta^2(\theta_1,\theta_2) = \ep[h((X,Z),(X',Z'))]$
    where $(X',Z')$ is independent copy of $(X,Z)$ with an identical distribution.
\end{corollary}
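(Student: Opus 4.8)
The plan is to recognize the final statement (the Corollary) as an immediate specialization of Theorem~\ref{thm:close-form}, applied not to the residual function $\bpsi(z;\theta)$ but to the \emph{differential residual function} $\bar{\bpsi}(z) := \bpsi(z;\theta_1) - \bpsi(z;\theta_2)$. The main observation, already recorded in the text preceding the Corollary, is the identity
\begin{equation*}
    \Delta(\theta_1,\theta_2) = \left\|\ep[\bm{\xi}_{\theta_1}(X,Z) - \bm{\xi}_{\theta_2}(X,Z)]\right\|_{\hbspf^q} = \left\|\ep[\bar{\bm{\xi}}(X,Z)]\right\|_{\hbspf^q},
\end{equation*}
where $\bar{\bm{\xi}}(x,z) = \bar{\bpsi}(z)\,k(x,\cdot)$ is exactly the feature map one would obtain from Definition~\ref{def:cmme} with $\bpsi$ replaced by $\bar{\bpsi}$. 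This identity holds because the map $\bm{\xi}_\theta$ is linear in $\bpsi$ and the Bochner integral is linear, so $\ep[\bm{\xi}_{\theta_1}] - \ep[\bm{\xi}_{\theta_2}] = \ep[\bm{\xi}_{\theta_1} - \bm{\xi}_{\theta_2}] = \ep[\bar{\bm{\xi}}]$.

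First I would observe that $\bar{\bpsi}$ satisfies the same structural role as a residual function: it maps $\inz$ to $\rr^q$, and the associated kernel on $(\inx\times\inz)^2$ built via \eqref{eq:kernel_h} is precisely
\begin{equation*}
    h((x,z),(x',z')) = \langle \bar{\bm{\xi}}(x,z), \bar{\bm{\xi}}(x',z')\rangle_{\hbspf^q} = \bar{\bpsi}(z)^\top\bar{\bpsi}(z')\,k(x,x'),
\end{equation*}
which matches the $h$ stated in the Corollary. Next I would note that the only hypothesis needed to invoke Theorem~\ref{thm:close-form} is the integrability condition $\ep[h((X,Z),(X,Z))] < \infty$, which is exactly what the Corollary assumes; this guarantees that $\bar{\bm{\xi}}(X,Z)$ is Bochner integrable (since $\ep\|\bar{\bm{\xi}}(X,Z)\|_{\hbspf^q} \leq \sqrt{\ep\|\bar{\bm{\xi}}(X,Z)\|_{\hbspf^q}^2} = \sqrt{\ep[h((X,Z),(X,Z))]} < \infty$), so all the manipulations in the proof of Theorem~\ref{thm:close-form} go through verbatim. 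Then expanding the squared norm and using Bochner integrability to exchange inner product and expectation twice,
\begin{equation*}
    \Delta^2(\theta_1,\theta_2) = \left\langle \ep[\bar{\bm{\xi}}(X,Z)], \ep[\bar{\bm{\xi}}(X',Z')]\right\rangle_{\hbspf^q} = \ep\left[\langle \bar{\bm{\xi}}(X,Z), \bar{\bm{\xi}}(X',Z')\rangle_{\hbspf^q}\right] = \ep[h((X,Z),(X',Z'))],
\end{equation*}
with $(X',Z')$ an independent copy, which is the claim.

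There is essentially no hard part here — the result is a corollary in the genuine sense that it requires only substituting $\bpsi \mapsto \bpsi(\cdot;\theta_1) - \bpsi(\cdot;\theta_2)$ into an already-proved theorem. The only point requiring a sentence of care is to confirm that the regularity/integrability apparatus underlying Theorem~\ref{thm:close-form} (Bochner integrability of the feature map, finiteness of the relevant expectations) is inherited by the differenced feature map; this follows immediately from the stated assumption $\ep[h((X,Z),(X,Z))] < \infty$ together with Jensen's inequality as shown above. If one wanted a fully self-contained argument rather than a reduction, one could alternatively expand $\|\mut_{\theta_1} - \mut_{\theta_2}\|_{\hbspf^q}^2 = \|\mut_{\theta_1}\|^2 - 2\langle\mut_{\theta_1},\mut_{\theta_2}\rangle + \|\mut_{\theta_2}\|^2$ and apply Theorem~\ref{thm:close-form} to each of the three $U$-statistic-type terms separately, then recombine using bilinearity of the inner product in the integrand; but the direct reduction via $\bar{\bpsi}$ is cleaner and I would present that.
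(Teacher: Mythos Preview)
Your proposal is correct and follows exactly the paper's own approach: the paper's proof is a single sentence stating that the result follows by applying the proof of Theorem~\ref{thm:close-form} to the feature map $\bar{\bm{\xi}}(x,z) = (\bpsi(z;\theta_1) - \bpsi(z;\theta_2))k(x,\cdot)$. Your write-up is more detailed (explicitly checking Bochner integrability via the assumed finiteness of $\ep[h((X,Z),(X,Z))]$), but the strategy is identical.
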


\begin{proof}
    The result follows by applying the proof of Theorem \ref{thm:close-form} to the feature map $\bar{\bm{\xi}}(x,z) := \bm{\xi}_{\theta_1}(x,z)-\bm{\xi}_{\theta_2}(x,z) = (\bpsi(z;\theta_1) - \bpsi(z;\theta_2))k(x,\cdot)$.
\end{proof}

\noindent Furthermore, we can express the empirical CMMD as 
\begin{equation*}
\Delta^2_n(\theta_1,\theta_2) := \frac{1}{n(n-1)}\sum_{1\leq i\neq j\leq n} h((x_i,z_i),(x_j,z_j)) 
\end{equation*}
where $h((x_i,z_i),(x_j,z_j)) := (\bpsi(z_i;\theta_1)-\bpsi(z_i;\theta_2))^\top(\bpsi(z_j;\theta_1) - \bpsi(z_j;\theta_2))k(x_i,x_j)$.

As we can see, the RKHS norm, inner product, and function evaluation computed with respect to $\mut_\theta$ all have meaningful economic interpretations. 
Table \ref{tab:interpretation} summarizes these interpretations.

\begin{table}[h!]
    \centering
     \caption{Interpretations of different operations on $\mut_\theta$ in $\hbspf^q$.}
    \label{tab:interpretation}
    \begin{tabular}{cl}
         \toprule
         \textbf{Operation} & \textbf{Interpretation} \\
         \midrule
         $\|\mut_\theta\|_{\hbspf^q}$ & conditional moment violation  \\
         $\langle f, \mut_\theta\rangle_{\hbspf^q}$ & violation w.r.t. the instrument $f$ \\
         $\mut_\theta(x,z)$ & structural instability at $(x,z)$ \\
         $\|\mut_{\theta_1} - \mut_{\theta_2}\|_{\hbspf^q}$ & discrepancy between $\mathcal{M}_{\theta_1}$ and $\mathcal{M}_{\theta_2}$\\
         \bottomrule
    \end{tabular}
\end{table}

\section{Parameter Estimation}
\label{sec:estimation}

Besides hypothesis testing, another important application of the CMR is parameter estimation. 
That is, given the CMR as in \eqref{eq:cond-moments}, we aim to find an estimate of $\theta_0$ that satisfies \eqref{eq:cond-moments} from the observed data $(x_i,z_i)_{i=1}^n$.
Based on the MMR, we define the estimator of $\theta_0$ as the parameter that minimizes \eqref{eq:estimator}:
\begin{equation}\label{eq:mmmr}
    \hat{\theta}_n := \arg\min_{\theta\in\Theta}\,\MH^2_n(\theta) = \arg\min_{\theta\in\Theta}\, \frac{1}{n(n-1)}\sum_{1\leq i\neq j \leq n} h_\theta((x_i,z_i),(x_j,z_j)).
\end{equation}
We call $\hat{\theta}_n$ a \emph{minimum maximum moment restriction} (MMMR) estimate of $\theta_0$. 
Note that it is also possible to adopt $V$-statistic in \eqref{eq:mmmr} instead of the $U$-statistic. 
Previously, \citet{Lewis18:AGMM} and \citet{Bennett19:DeepGMM} proposed to estimate $\theta_0$ based on \eqref{eq:sup-moments} and $\mathscr{F}$ that is parameterized by deep neural networks. However, their algorithms require solving a minimax game, whereas our approach for estimation is merely a minimization problem.

The following theorem shows that $\hat{\theta}_n$ is a consistent estimate of $\theta_0$. The proof can be found in Appendix \ref{sec:proof-consistency}.

\begin{theorem}[Consistency of $\hat{\theta}_n$]\label{thm:consistency}
    Assume that the parameter space $\Theta$ is compact. Then, we have $\hat{\theta}_n\xrightarrow{p}\theta_0$.
\end{theorem}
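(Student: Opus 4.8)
The plan is to invoke the standard M-estimator consistency argument (\`a la \citet[Thm 2.1]{Newey94:LargeSample} or \citet[Ch. 3]{Hall05:GMM}): $\hat\theta_n$ minimizes the random criterion $\MH_n^2(\theta)$, which converges uniformly to the deterministic criterion $\MM^2(\theta)$, and the latter is uniquely minimized at $\theta_0$ by Theorem~\ref{thm:identifiability}; from these three ingredients consistency follows. Concretely, I would first record \emph{identification of the limit}: by Lemma~\ref{lem:optimal} and Theorem~\ref{thm:identifiability} (together with the global identification Assumption~\ref{asmp:a3}), $\MM^2(\theta)=\|\mut_\theta\|_{\hbspf^q}^2\ge 0$ with equality if and only if $\theta=\theta_0$, and $\MM^2$ is continuous on $\Theta$ by Assumption~\ref{asmp:a2} (continuity of $\theta\mapsto\ep[\bpsi(Z;\theta)|x]$, hence of $\theta\mapsto\mut_\theta$ in $\hbspf^q$). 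Since $\Theta$ is compact, for every open neighbourhood $N$ of $\theta_0$ we have $\inf_{\theta\in\Theta\setminus N}\MM^2(\theta)>\MM^2(\theta_0)=0$.

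Second, I would establish the \emph{uniform law of large numbers}, $\sup_{\theta\in\Theta}\bigl|\MH_n^2(\theta)-\MM^2(\theta)\bigr|\xrightarrow{p}0$. Here $\MH_n^2(\theta)$ is a $U$-statistic of order two with $\theta$-dependent kernel $h_\theta$, and $\MM^2(\theta)=\ep[h_\theta((X,Z),(X',Z'))]$ by Theorem~\ref{thm:close-form}. Pointwise, $\MH_n^2(\theta)\xrightarrow{p}\MM^2(\theta)$ by the weak law for $U$-statistics; to upgrade to uniformity over the compact $\Theta$ I would use a bracketing/stochastic-equicontinuity argument: show $\theta\mapsto h_\theta((x,z),(x',z'))$ is continuous (immediate from continuity of $\bpsi$ in Assumption~\ref{asmp:a2} and continuity of $k$ in Assumption~\ref{asmp:a4}) and dominated by an integrable envelope $H((x,z),(x',z')):=\sup_{\theta\in\Theta}\|\bpsi(z;\theta)\|_2\|\bpsi(z';\theta)\|_2\,|k(x,x')|$, with $\ep[H]<\infty$ following from boundedness of $k$ plus a moment condition on $\sup_\theta\|\bpsi(Z;\theta)\|_2$; then compactness of $\Theta$ plus the $U$-statistic ULLN (e.g. a Glivenko--Cantelli-type result for $U$-processes, or simply reducing to the i.i.d. ULLN after conditioning) yields the uniform convergence.

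Third, I would assemble the \emph{argmax/argmin continuity} conclusion in the usual way: fix $\varepsilon>0$ and an open ball $N=B(\theta_0,\varepsilon)$; let $\eta:=\inf_{\theta\notin N}\MM^2(\theta)>0$. On the event $\{\sup_\theta|\MH_n^2(\theta)-\MM^2(\theta)|<\eta/2\}$, which has probability $\to1$, any minimizer $\hat\theta_n$ satisfies $\MM^2(\hat\theta_n)\le\MH_n^2(\hat\theta_n)+\eta/2\le\MH_n^2(\theta_0)+\eta/2\le\MM^2(\theta_0)+\eta=\eta<\inf_{\theta\notin N}\MM^2(\theta)$, forcing $\hat\theta_n\in N$. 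Hence $P(\hat\theta_n\in B(\theta_0,\varepsilon))\to1$ for every $\varepsilon>0$, i.e. $\hat\theta_n\xrightarrow{p}\theta_0$.

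The main obstacle is the uniform law of large numbers for the $\theta$-indexed $U$-statistic $\MH_n^2(\theta)$: pointwise convergence and the closed form of $\MM^2$ are already in hand (Theorems~\ref{thm:close-form},~\ref{thm:expansion}), but promoting this to uniformity over $\Theta$ requires the envelope/domination bound on $\sup_{\theta\in\Theta}\|\bpsi(Z;\theta)\|_2$, which is not explicitly among Assumptions \ref{asmp:a1}--\ref{asmp:a4}; I expect the proof in the appendix either to add such a moment/domination condition or to exploit the product structure $h_\theta=\bpsi^\top\bpsi\cdot k$ to reduce the $U$-process ULLN to a simpler ULLN for $\theta\mapsto\frac1n\sum_i\bpsi(z_i;\theta)k(x_i,\cdot)$ in $\hbspf^q$. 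Everything else --- the identification step from Theorem~\ref{thm:identifiability}, the continuity from Assumption~\ref{asmp:a2}, and the final extraction of consistency --- is routine.
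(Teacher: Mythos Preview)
Your proposal is correct and matches the paper's approach essentially line for line: the paper states an auxiliary lemma that imposes an integrable envelope $F_{\bpsi}(z)\ge\|\bpsi(z;\theta)\|_2$ (exactly the domination condition you anticipated), bounds $|h_\theta|\le F_{\bpsi}(z)F_{\bpsi}(z')\sqrt{k(x,x)k(x',x')}$, and then invokes \citet[Lemma~2.4]{Newey94:LS} for the $U$-statistic ULLN and continuity of $\MM^2$, followed by \citet[Theorem~2.1]{Newey94:LS} for the argmin step. Your only ``obstacle'' is resolved precisely as you guessed---by adding the envelope hypothesis in the lemma rather than deriving it from \ref{asmp:a1}--\ref{asmp:a4}.
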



Despite the consistency, we suspect that $\hat{\theta}_n$ may not be asymptotically efficient and there exist better estimators.  
Theorem \ref{thm:expansion} shows that $\MM(\theta)$ depends on a continuum of moment conditions reweighted by the non-uniform eigenvalues $(\lambda_j)_j$, which suggests that a \emph{reweighting matrix} must also be incorporated in order to achieve the optimality \citep{Hall05:GMM}.
Constructing an optimal choice of reweighting matrix in an infinite dimensional RKHS is an interesting topic \citep{Carrasco00:Continuum}, and we leave it to future work.

\subsection{Maximum Moment Restriction for Instrumental Variable Regression}
\label{sec:ivr}

To illustrate one of the advantages of the MMR for parameter estimation, let us consider the nonparametric instrumental variable regression problem \citep{Angrist08:Harmless,Lewis18:AGMM,Singh19:KIV,Bennett19:DeepGMM,Muandet19:DualIV}.
Let $X$ be a treatment (endogeneous) variable taking values in $\inx\subseteq\rr^d$ and $Y$ a real-valued outcome variable.
Our goal is to estimate a function $g:\inx\to\rr$ from a structural equation model (SEM) of the form
\begin{equation}\label{eq:sem-iv}
    Y = g(X) + \varepsilon, \quad 
    X = h(Z) + f(\varepsilon) + \nu,
\end{equation}
\noindent where we assume that $\ep[\varepsilon] = 0$ and $\ep[\nu] = 0$.
Unfortunately, as we can see from \eqref{eq:sem-iv}, $\varepsilon$ is correlated with the treatment $X$, i.e., $\ep[\varepsilon|X] \neq 0$,  and hence standard regression methods cannot be used to estimate $g$.
This setting often arises when there exist unobserved confounders between the treatment $X$ and outcome $Y$.

In instrumental variable regression, we assume access to an \emph{instrumental} variable $Z$ which is associated with the treatments $X$, but not with the outcome variable $Y$, other than through its effect on the treatments.
Moreover, the instrument $Z$ is assumed to be uncorrelated with $\varepsilon$.
This implies the conditional moment restriction $\ep[\varepsilon\,|\,Z] = \ep[Y - g(X)\,|\,Z] = 0$ for $P_Z$-almost all $z$ \citep{Newey93:CMR,Lewis18:AGMM,Bennett19:DeepGMM}. 
Given an i.i.d. sample $(x_i,y_i,z_i)_{i=1}^n$ from $P(X,Y,Z)$, the MMR allows us to reduce the problem of estimating $g$ to a regularized empirical risk minimization (ERM) problem
\begin{eqnarray}\label{eq:kreg-obj}
\widehat{g}_{\lambda} &:=& \arg\min_{g\in\mathcal{G}_l}\;\MH^2_n(g)+\lambda \|g\|^2_{\mathcal{G}_l} \nonumber \\ 
    &=& \arg\min_{g\in\mathcal{G}_l}\;\frac{1}{n^2}\sum_{i=1}^n\sum_{j=1}^n (y_i - g(x_i))(y_j - g(x_j))k(z_i,z_j)+\lambda \|g\|^2_{\mathcal{G}_l},
\end{eqnarray}
where $\lambda$ is a positive regularization parameter and $\mathcal{G}_l$ is a reproducing kernel Hilbert space (RKHS) of real-valued functions on $\inx$ with the reproducing kernel $l:\inx\times\inx\to\rr$. 
Note that we adopt the $V$-statistic instead of the $U$-statistic in \eqref{eq:kreg-obj}.
By the representer theorem, the optimal solution to \eqref{eq:kreg-obj} can be expressed as a linear combination
\begin{equation}\label{eq:representer}
    \widehat{g}_{\lambda}(x) = \sum_{i=1}^{n}\alpha_i l(x,x_i)
\end{equation}
for some $(\alpha_1,\ldots,\alpha_n)\in\rr^n$.
Let $K =[k(z_i,z_j)]_{i,j}$ and $L =[l(x_i,x_j)]_{i,j}$ be the kernel matrices in $\rr^{n\times n}$ of $\bm{z} = [z_1,\ldots,z_n]^\top$ and $\bm{x} = [x_1,\ldots,x_n]^\top$, respectively, and $\bm{y} := [y_1,\ldots,y_n]^\top$.  
Substituting \eqref{eq:representer} back into \eqref{eq:kreg-obj} yields a \emph{generalized ridge regression} (GRR) problem
\begin{eqnarray}\label{eq:obj_reg}
    \bm{\alpha}_{\lambda} &:=& \arg\min_{\bm{\alpha}\in\rr^n}\;  \frac{1}{n^2}(\bm y-L\bm{\alpha})^{\top} K (\bm y-L\bm{\alpha})+\lambda \bm{\alpha}^{\top}L\bm{\alpha}.
\end{eqnarray}
That is, the optimal coefficients $\bm{\alpha}_{\lambda}$ can be obtained by solving the first-order stationary condition $(LKL + n^2\lambda L)\bm{\alpha} = LK\bm{y}$ and if $L$ is positive definite, the solution has a \emph{closed-form} expression, \ie,
\begin{equation}\label{eq:mmriv}
    \widehat{g}_{\lambda}(x) = \sum_{i=1}^{n}\alpha_{\lambda,i}l(x,x_i), \quad\quad \bm{\alpha}_{\lambda} = (LKL+ n^2\lambda L)^{-1}LK\bm{y}.
\end{equation}
Similar technique has been considered in \citet{Singh19:KIV} and \citet{Muandet19:DualIV}.
In \citet{Singh19:KIV}, the authors extended the two-stage least square (2SLS) by modeling the first-stage regression with the conditional mean embedding of $P(X|Z)$ \citep{Muandet17:KME} which is then used in the second-stage kernel ridge regression.
In \citet{Muandet19:DualIV}, the authors showed that the two-stage procedure can be reformulated as a convex-concave saddle-point problem. 
When the solutions lie in the RKHS, the closed-form solution similar to \eqref{eq:mmriv} and the one in \citet{Singh19:KIV} can be obtained.
By contrast, the MMR-based approach allows us to reformulate the problem directly as a generalized ridge regression (GRR) in which the values of hyperparameters, \eg, the regularization parameter $\lambda$, can be chosen via the popular cross-validation procedures.






\section{Experiments}

In this section, we provide further description of our experiments as well as additional experimental results.

\subsection{Simultaneous Equation Models}
\label{sec:sem}

A simultaneous equation model (SEM) is a fundamental concept in economics.
In one of our experiments, we consider the following SEM:
\begin{equation}\label{eq:sem}
    \begin{aligned}
    Q &= \alpha_d P + \beta_d R + U, \quad \alpha_d < 0, && \quad \text{(Demand)} \\
    Q &= \alpha_s P + \beta_s W + V, \quad \alpha_s > 0, && \quad \text{(Supply)}
\end{aligned}
\end{equation}
where $Q$ and $P$ denote quantity and price, respectively, $R$ and $W$ are exogeneous variables, and $U$ and $V$ are the error terms. 
To obtain \emph{reduced-form equations} of \eqref{eq:sem}, we must solve for the endogeneous variables $P$ and $Q$.
First, we solve for $P$ by equating the two equations in \eqref{eq:sem}:
\begin{equation}\label{eq:sem-p}
    P = \left[\frac{\beta_s}{\alpha_d - \alpha_s}\right]W - \left[\frac{\beta_d}{\alpha_d - \alpha_s}\right]R + \frac{V- U}{\alpha_d - \alpha_s}.
\end{equation}
Then, we can solve for $Q$ by plugging in $P$ to the supply equation in \eqref{eq:sem}:
\begin{equation}\label{eq:sem-q}
    Q = \left[\frac{\alpha_s\beta_s}{\alpha_d - \alpha_s} + \beta_s\right]W - \left[\frac{\alpha_s\beta_d}{\alpha_d-\alpha_s}\right]R + \frac{\alpha_s}{\alpha_d-\alpha_s}(V-U) + V .
\end{equation}
By comparing \eqref{eq:sem-p} and \eqref{eq:sem-q} to the data generating process in our experiment, we obtain the following system of equations:
\begin{equation}\label{eq:system}
\begin{aligned}
    \lambda_{11} &= -\frac{\alpha_s\beta_d}{\alpha_d - \alpha_s}, \\
    \lambda_{12} &= \frac{\alpha_s\beta_s}{\alpha_d-\alpha_s} + \beta_s,
\end{aligned}
\qquad \qquad 
\begin{aligned}
    \lambda_{21} &= -\frac{\beta_d}{\alpha_d - \alpha_s} \\
    \lambda_{22} &= \frac{\beta_s}{\alpha_d - \alpha_s} .
\end{aligned}
\end{equation}
Finally, setting $(\lambda_{11},\lambda_{12},\lambda_{21},\lambda_{22}) = (1,-1,1,1)$ and then solving \eqref{eq:system} result in a non-trivial solution $(\alpha_d,\beta_d,\alpha_s,\beta_s) = (-1,2,1,-2)$.
This solution coincides with the one obtained from the two-stage least square (2SLS) procedure \citep[Ch. 4]{Angrist08:Harmless}.

\subsection{Type-I Errors}
\label{sec:type1-error}

The KCM test with bootstrapping is based on the asymptotic distribution of the test statistic under $H_0$ (cf.~Theorem \ref{thm:asymptotic}). Hence, the test reliably controls the Type-I error when the sample size is sufficiently large, \ie, we are in the asymptotic regime. For the considered examples, this is the case already for moderate sample sizes. 
We report the Type-I error at a significance level $\alpha=0.05$ for $n \in \{100,200,400,600,800,1000\}$ in Figure \ref{fig:results-typei}.

\vspace{1.5em}
\begin{figure}[h!]
    \centering
    \begin{subfigure}[t]{0.33\textwidth}
        \centering
        \includegraphics[width=\textwidth]{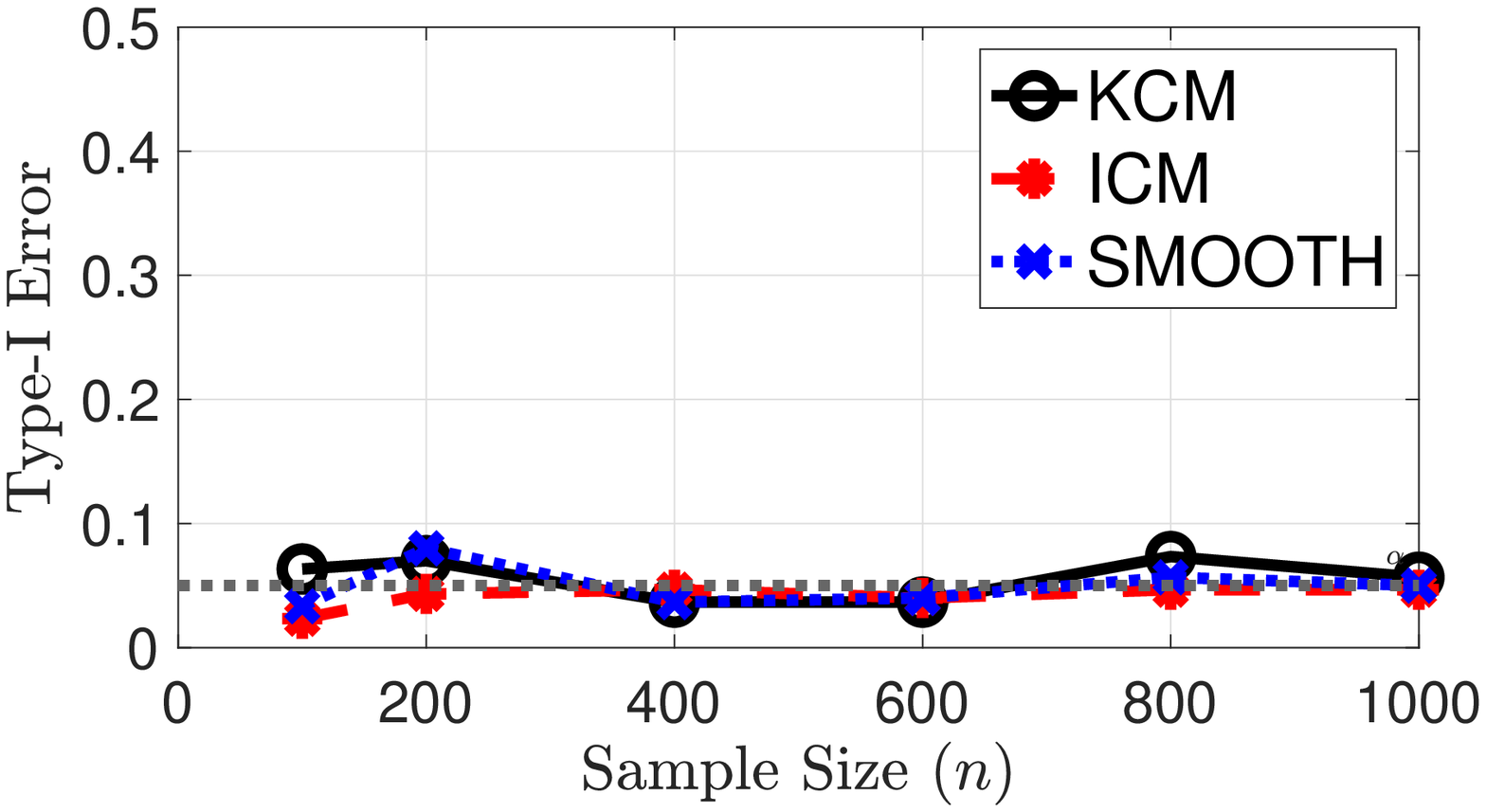}
        \caption{\texttt{REG-HOM}}
    \end{subfigure}%
    \hfill
    \begin{subfigure}[t]{0.33\textwidth}
        \centering
        \includegraphics[width=\textwidth]{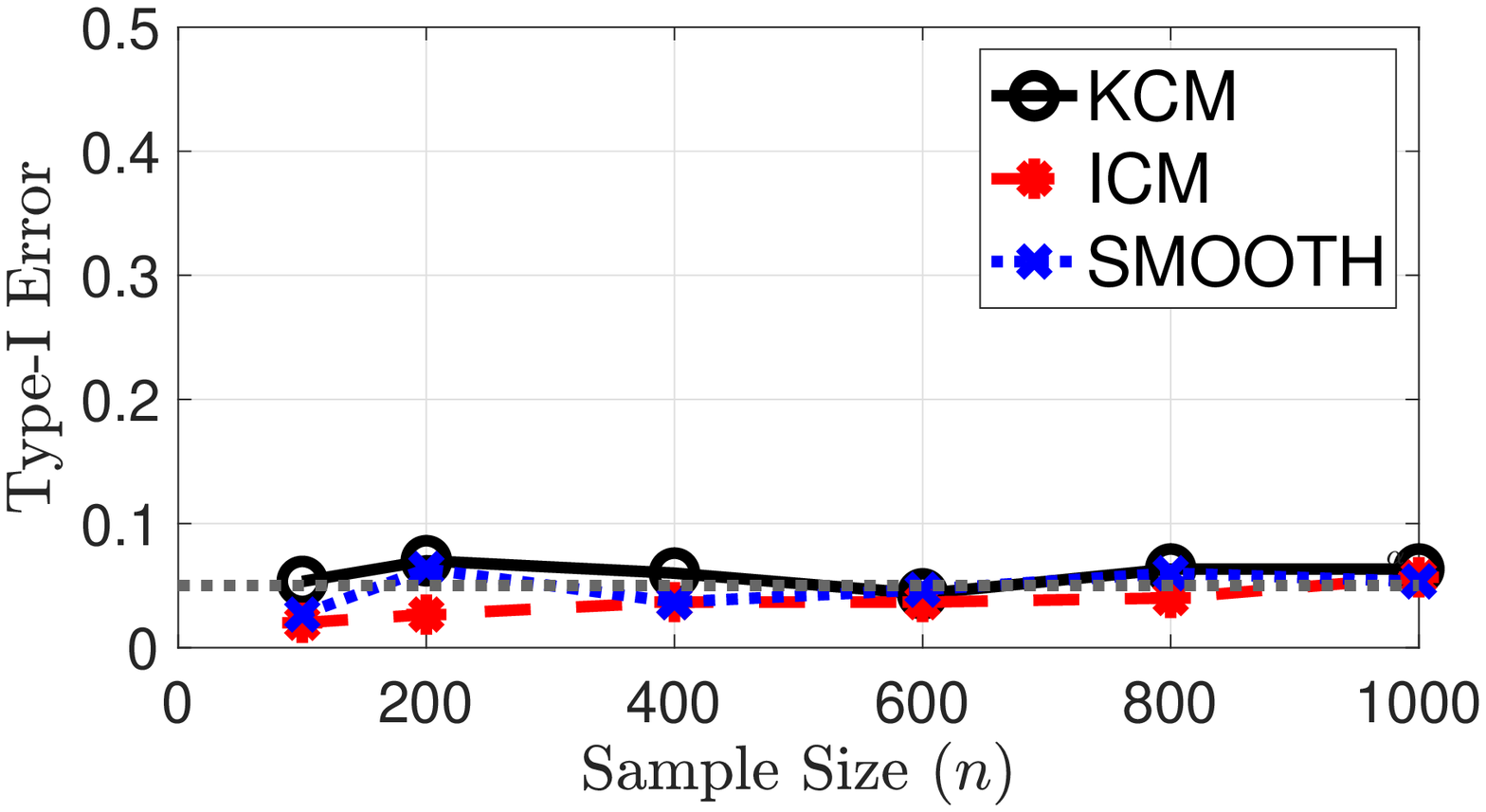}
        \caption{\texttt{REG-HET}}
    \end{subfigure}%
    \hfill
    \begin{subfigure}[t]{0.33\textwidth}
        \centering
        \includegraphics[width=\textwidth]{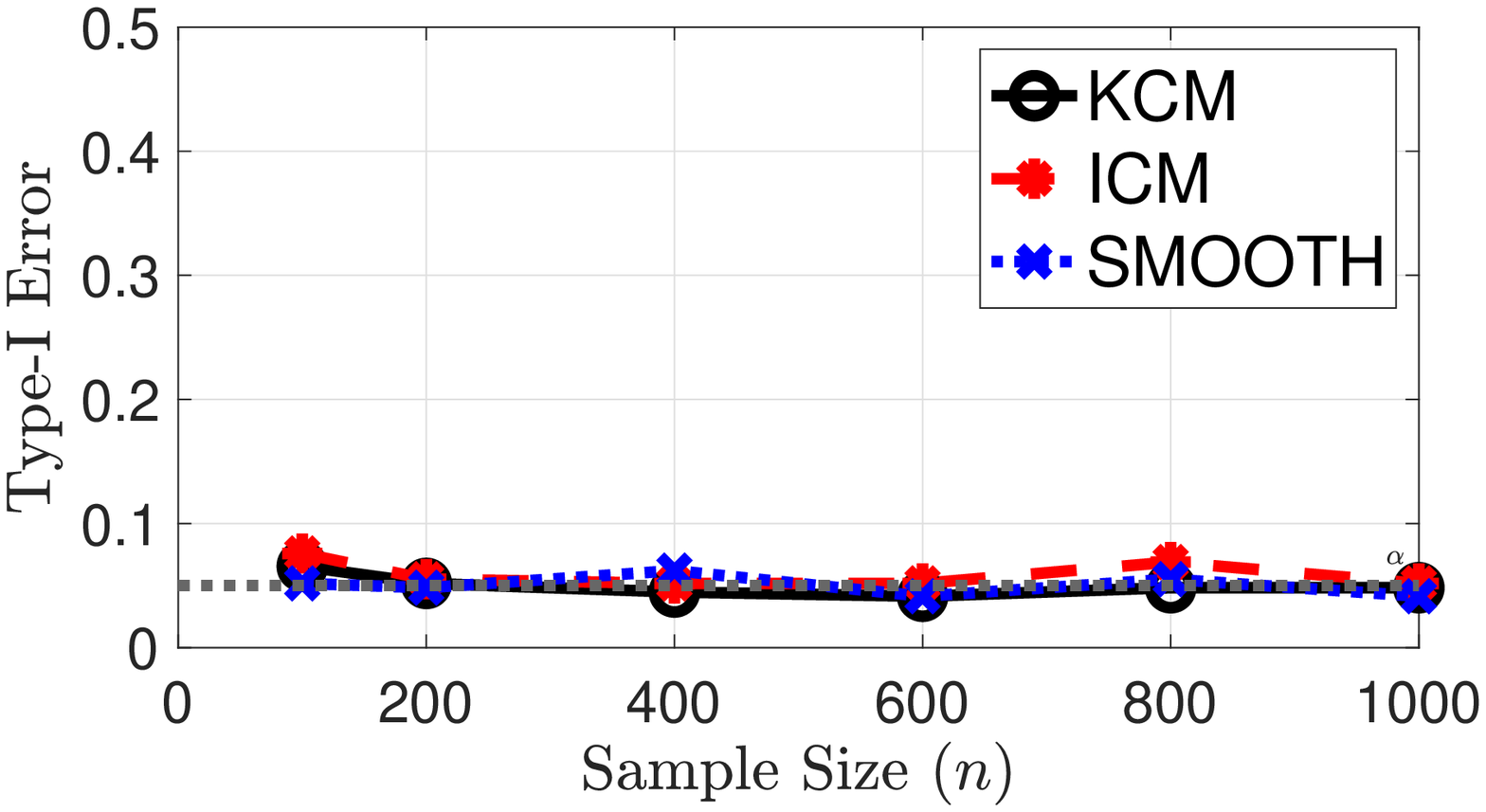}
        \caption{\texttt{SIMEQ}}
    \end{subfigure}
    
    \caption{The Type-I errors averaged over 300 trials of KCM, ICM, and smooth tests under the null hypothesis ($\delta=0$) as we vary the sample size $n$.}
    \label{fig:results-typei}
\end{figure}

\section{Proofs}
\label{sec:proofs}

This section collects all the proofs of the results presented in the main paper.

\subsection{Proof of Lemma \ref{lem:optimal}}

\begin{proof}
     We have $M_{\theta_0} f = \sum_{i=1}^q \ep[\psi_i(Z;\theta_0)f_i(X)]$ and, for all $i=1,\ldots,q$, 
     \begin{equation*}
     \ep_{\mathit{XZ}}[\psi_i(Z;\theta_0)f_i(X)] = \ep_X[\ep_{Z}[\psi_i(Z;\theta_0)f_i(X)|X]] 
    = \ep_X[\ep_Z[\psi_i(Z;\theta_0)|X]f_i(X)] = 0 
    \end{equation*}
    by the law of iterated expectation. The last equality follows from the definition of $\theta_0$ and the continuity of $f_i$, \ie, by Assumption \ref{asmp:a4}.
\end{proof}

\subsection{Proof of Theorem \ref{thm:cmme-consistency}}

Our result follows directly from \citet[Lemma 2]{Smale07:Theory} and \citet[Theorem 3.4]{Pinelis94:Banach} which rely on the Bennett inequality for vector-valued random variables.
We reproduce the proof here for completeness.
\begin{proof}
    First, recall that $\mut_{\theta} = \ep[\bxi_{\theta}(X,Z)]$ and $\muth_\theta = \frac{1}{n}\sum_{i=1}^n\bxi_\theta (x_i,z_i)$ for the independent random variables $\{\bxi_\theta(x_i,z_i)\}_{i=1}^n$. 
    Then, for any $\varepsilon > 0$, it follows from \citet[Lemma 1]{Smale07:Theory} that
    \begin{equation*}
        P\left\{\left\|\frac{1}{n}\sum_{i=1}^n\bxi_\theta(x_i,z_i) - \mut_\theta\right\|_{\hbspf^q} \geq \varepsilon \right\} \leq 2\exp\left\{ -\frac{n\varepsilon}{2C_\theta}\log\left(1 + \frac{C_\theta \varepsilon}{\sigma_\theta^2}\right)\right\} .
    \end{equation*}
    Taking $t := C_\theta \varepsilon/\sigma_\theta^2$ and applying the inequality $\log(1+t) \geq t/(1+t)$ for all $t>0$ yield 
    \begin{eqnarray*}
        P\left\{ \left\| \frac{1}{n}\sum_{i=1}^n\bxi_\theta (x_i,z_i) - \mut_\theta \right\|_{\hbspf^q} \geq \varepsilon \right\} &\leq& 2\exp\left\{ -\frac{n\varepsilon}{2C_\theta}\left(\frac{C_\theta \varepsilon}{C_\theta\varepsilon + \sigma_\theta^2}\right)\right\} \\
        &=& 2\exp\left\{ -\frac{n\varepsilon^2}{2C_\theta\varepsilon + 2\sigma_\theta^2}\right\}.
    \end{eqnarray*}
    The value of $\varepsilon > 0$ for which this probability equal to $\delta$ can be obtained by solving the quadratic equation $n\varepsilon^2 = \log(2/\delta)(2C_\theta\varepsilon + 2\sigma_\theta^2)$.
    As a result, we have with confidence $1-\delta$ that
    \begin{equation}
        \left\| \frac{1}{n}\sum_{i=1}^n\bxi_\theta(x_i,z_i) - \mut_\theta\right\|_{\hbspf^q} \leq \frac{2C_\theta\log\frac{2}{\delta}}{n} + \sqrt{\frac{2\sigma_\theta^2\log\frac{2}{\delta}}{n}},
    \end{equation}
    as required.
\end{proof}

It remains to show that, for each $\theta\in\Theta$, there exists a constant $C_\theta < \infty$ such that $\|\bxi_\theta(X,Z)\|_{\hbspf^q} < C_\theta$ almost surely. Note that for any $(x,z)\in\inx\times\inz$ for which $P_{\mathit{XZ}}(x,z) > 0$,
\begin{eqnarray*}
    \|\bm{\xi}_\theta(x,z)\|_{\hbspf^p} &=& \sqrt{\|\bm{\xi}_\theta(x,z)\|_{\hbspf^p}^2} \\ 
    &=& \sqrt{\bpsi(z;\theta)^\top\bpsi(z;\theta)k(x,x)} \\ 
    &\leq& \sup_{x,z}\sqrt{\bpsi(z;\theta)^\top\bpsi(z;\theta)k(x,x)} < \infty,
\end{eqnarray*}
where the last inequality follows from Assumptions \ref{asmp:a2} and \ref{asmp:a4}.

\subsection{Proof of Theorem \ref{thm:identifiability}}
\label{sec:proof-identify}

\begin{proof}
    If $\CMR(x;\theta_1) = \CMR(x;\theta_2)$ for $P_X$-almost all $x$, then the equality $\mut_{\theta_1} = \mut_{\theta_2}$ follows straightforwardly.
    Suppose that $\mut_{\theta_1} = \mut_{\theta_2}$ and let $\bm{\delta}(x) := \CMR(x;\theta_1) - \CMR(x;\theta_2)$.
    Then, we have 
    \begin{eqnarray}
    \|\mut_{\theta_1} - \mut_{\theta_2}\|^2_{\hbspf^q} &=& \left\| \int \bxi_{\theta_1}(x,z)\dd P_{XZ}(x,z) - \int \bxi_{\theta_2}(x,z)\dd P_{XZ}(x,z) \right\|_{\hbspf^q}^2 \nonumber \\ 
    &=& \left\| \int \CMR(x;\theta_1)k(x,\cdot) \dd P_X(x) - \int \CMR(x;\theta_2)k(x,\cdot)\dd P_X(x) \right\|_{\hbspf^q}^2 \nonumber \\
    &=& \left\| \int (\CMR(x;\theta_1) - \CMR(x;\theta_2) )k(x,\cdot)\dd P_X(x) \right\|_{\hbspf^q}^2 \nonumber \\
    &=& \iint \bm{\delta}(x)^\top k(x,x')\bm{\delta}(x')\dd P_X(x)\dd P_{X'}(x') = 0, \label{eq:integral}
    \end{eqnarray}
    where $X'$ is an independent copy of $X$.
    It follows from \eqref{eq:integral} and Assumption \ref{asmp:a2} that the function $g(x) := \bm{\delta}(x)p_X(x)$ has zero L2-norm, \ie, $\|g\|_2^2 = 0$ where $p_X$ denotes the density of $P_X$.
    As a result, $\bm{\delta}(x) = \mathbf{0}$ a.e. $P_X$ implying that $P_X(B_0)=1$ where $B_0 := \left\{ x\in\inx \, :\, \CMR(x;\theta_1) - \CMR(x;\theta_2) = \mathbf{0}\right\}$. Therefore, $\CMR(x;\theta_1) = \CMR(x;\theta_2)$ for $P_X$-almost all $x$. This completes the proof.
\end{proof}

\subsection{Proof of Theorem \ref{thm:close-form}}

\begin{proof}
    By the definition of $\mathbb{M}(\theta)$ and the Bochner integrability of $\bxi_\theta$,
    \begin{eqnarray*}
    \mathbb{M}^2(\theta) &=& 
    \|\mut_\theta\|_{\hbspf^q}^2 \\
    &=& \langle \mut_\theta,\mut_\theta\rangle_{\hbspf^q} \\
    &=& \langle \ep[\bm{\xi}_\theta(X,Z)], \ep[\bm{\xi}_\theta(X,Z)] \rangle_{\hbspf^q} \\
    &=& \ep[\langle \bm{\xi}_\theta(X,Z), \ep[\bm{\xi}_\theta(X,Z)] \rangle_{\hbspf^q}] \\
    &=& \ep[\langle \bm{\xi}_\theta(X,Z), \bm{\xi}_\theta(X',Z') \rangle_{\hbspf^q}] \\
    &=& \ep[h_\theta((X,Z),(X',Z'))],
    \end{eqnarray*}
    where $(X',Z')$ is an independent copy of $(X,Z)$ with an identical distribution.
\end{proof}

\subsection{Proof of Theorem \ref{thm:expansion}}

\begin{proof} 
    By Mercer's theorem \citep[Theorem 4.49]{SteChr2008}, we have $k(x,x') = \sum_j\lambda_je_j(x)e_j(x')$ where the convergence is absolute and uniform.
    Recall that $\bm{\zeta}_\theta^j(x,z) := \left(\psi_1(z;\theta)e_j(x), \ldots,\psi_q(z;\theta)e_j(x) \right)$.
    Hence, we can express the kernel $h_\theta$ as
    \begin{align*}
        h_\theta((x,z),(x',z')) &= \bpsi(z;\theta)^\top\bpsi(z';\theta)k(x,x') \\
            &= \bpsi(z;\theta)^\top\bpsi(z';\theta)\left(\sum_j\lambda_je_j(x)e_j(x')\right) \\
            &= \sum_j\lambda_j\bpsi(z;\theta)^\top\bpsi(z';\theta)e_j(x)e_j(x') \\
            &= \sum_j\lambda_j\left[\bpsi(z;\theta)e_j(x) \right]^\top \left[\bpsi(z';\theta)e_j(x')\right] \\
            &= \sum_j\lambda_j\bm{\zeta}_\theta^j(x,z)^\top \bm{\zeta}_\theta^j(x',z').
            \intertext{Since $\lambda_j > 0$, the function $h_\theta$ is positive definite. Then, we can express $\MM^2(\theta)$ as follows:}
            \MM^2(\theta) &= \ep[h_\theta((X,Z),(X',Z'))] \\
            &= \ep\left[\sum_j\lambda_j\bm{\zeta}_\theta^j(X,Z)^\top \bm{\zeta}_\theta^j(X',Z')\right] \\
            &= \sum_j\lambda_j\ep_{XZ}\left[\bm{\zeta}_\theta^j(X,Z)\right]^\top \ep_{X'Z'}\left[\bm{\zeta}_\theta^j(X',Z')\right] \\
            &= \sum_j\lambda_j \left\|\ep_{XZ}\left[\bm{\zeta}_\theta^j(X,Z)\right]\right\|_2^2.
    \end{align*}
    This completes the proof.
\end{proof}

\subsection{Proof of Theorem \ref{thm:consistency}}
\label{sec:proof-consistency}

In order to show the consistency of $\hat{\theta}_n := \arg\min_{\theta\in\Theta}\,\MH^2_n(\theta)$, we need the uniform consistency of $\MH^2_n(\theta)$ and the continuity of $\theta\mapsto \MM^2(\theta)$.
The following lemma gives these two results.

\begin{lemma}\label{lem:mmr-consistency}
    Assume that there exists an integrable and symmetric function $F_{\bpsi}$ such that $\|\bpsi(z,\theta)\|_2 \leq F_{\bpsi}(z)$ for any $\theta\in\Theta$ and $z\in\inz$.
    If Assumption \ref{asmp:a4} holds, $\sup_{\theta\in\Theta}|\mathbb{M}_n^2(\theta)-\mathbb{M}^2(\theta)| \xrightarrow{p} 0 $ and $\theta \mapsto \MM^2(\theta)$ are continuous.
\end{lemma}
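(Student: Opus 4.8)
The plan is to establish the two claims of Lemma \ref{lem:mmr-consistency}---uniform convergence in probability of $\MH_n^2(\theta)$ to $\MM^2(\theta)$ and continuity of $\theta\mapsto\MM^2(\theta)$---by treating $\MH_n^2$ as a $U$-statistic with a $\theta$-indexed kernel and invoking a uniform law of large numbers (ULLN) for $U$-processes. First I would recall from \eqref{eq:kernel_h} that $h_\theta((x,z),(x',z')) = \bpsi(z;\theta)^\top\bpsi(z';\theta)k(x,x')$, so that $\MH_n^2(\theta)$ is a degree-$2$ $U$-statistic whose kernel is Lipschitz/continuous in $\theta$ by Assumption \ref{asmp:a2}(i). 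The envelope hypothesis $\|\bpsi(z,\theta)\|_2\le F_{\bpsi}(z)$ together with the boundedness of $k$ from Assumption \ref{asmp:a4}, i.e.\ $\sup_x k(x,x)=:\kappa^2<\infty$, gives the integrable envelope
\[
|h_\theta((x,z),(x',z'))| \le \kappa^2 F_{\bpsi}(z)F_{\bpsi}(z') =: G((x,z),(x',z')),
\]
with $\ep[G] = \kappa^2(\ep[F_{\bpsi}(Z)])^2<\infty$ (finiteness of $\ep[F_{\bpsi}]$ follows from $\ep[F_{\bpsi}^2]<\infty$, or can be assumed directly as part of integrability). This is exactly the setting of the ULLN for $U$-statistics: for a class $\{h_\theta\}$ indexed by a compact metric space $\Theta$, with $h_\theta$ continuous in $\theta$ for almost every argument and dominated by a fixed integrable envelope, one has $\sup_{\theta\in\Theta}|\MH_n^2(\theta)-\MM^2(\theta)|\xrightarrow{p}0$; see e.g.\ the $U$-process analogue of the uniform law of large numbers. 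Concretely I would decompose the $U$-statistic into its Hoeffding projection plus a degenerate remainder, apply the classical Glivenko--Cantelli-type argument to the linear (empirical-process) term using a finite cover of $\Theta$ and the continuity of $\theta\mapsto\ep_{U'}[h_\theta(U,U')]$, and control the degenerate part via its vanishing $L^2$ norm.

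Second, for the continuity of $\theta\mapsto\MM^2(\theta)$, I would argue directly from Theorem \ref{thm:close-form}, which gives $\MM^2(\theta)=\ep[h_\theta((X,Z),(X',Z'))]$. Fix $\theta_n\to\theta$ in $\Theta$. By Assumption \ref{asmp:a2}(i), $h_{\theta_n}((x,z),(x',z'))\to h_\theta((x,z),(x',z'))$ pointwise for $P_{XZ}\otimes P_{XZ}$-almost every pair of arguments (using continuity of $\bpsi(z;\cdot)$ for each $z$ and bilinearity). The envelope $G$ above dominates $|h_{\theta_n}|$ uniformly in $n$ and is integrable, so the dominated convergence theorem yields $\MM^2(\theta_n)\to\MM^2(\theta)$. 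Since $\Theta$ is metric (a subset of $\rr^r$), sequential continuity gives continuity.

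The main obstacle I anticipate is making the ULLN for the $U$-statistic rigorous: one must verify the measurability/separability conditions so that $\sup_{\theta\in\Theta}|\MH_n^2(\theta)-\MM^2(\theta)|$ is a genuine random variable and that a uniform law applies. The cleanest route is to use a bracketing or covering argument: the continuity of $\bpsi(z;\cdot)$ on the compact set $\Theta$ with the common integrable envelope $F_{\bpsi}$ makes $\{\bpsi(\cdot;\theta):\theta\in\Theta\}$ a Glivenko--Cantelli class, hence so is the class of products $\{h_\theta\}$, and $U$-process ULLNs (the de la Pe\~na--Gin\'e type extension of the classical Glivenko--Cantelli theorem to $U$-statistics) then apply. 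An alternative, avoiding $U$-process machinery altogether, is to bound $|\MH_n^2(\theta)-\MH_n^2(\theta')|$ and $|\MM^2(\theta)-\MM^2(\theta')|$ uniformly by a modulus of continuity in terms of $\sup_\theta$-type quantities times $\frac{1}{n(n-1)}\sum_{i\ne j}F_{\bpsi}(z_i)F_{\bpsi}(z_j)$, establish pointwise convergence $\MH_n^2(\theta)\xrightarrow{p}\MM^2(\theta)$ by the LLN for $U$-statistics, and then upgrade to uniform convergence via a standard $\varepsilon/3$ argument over a finite cover of the compact $\Theta$; stochastic equicontinuity is the only nontrivial ingredient and it follows from the integrable envelope. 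I would present whichever of these is shortest given the paper's conventions, flagging the equicontinuity step as the crux.
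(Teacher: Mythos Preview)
Your proposal is correct and follows essentially the same route as the paper's proof: you establish the envelope $|h_\theta((x,z),(x',z'))|\le F_{\bpsi}(z)F_{\bpsi}(z')\sqrt{k(x,x)k(x',x')}$ using the hypothesis on $F_{\bpsi}$ together with Assumption~\ref{asmp:a4}, note continuity of $h_\theta$ in $\theta$ from Assumption~\ref{asmp:a2}(i), and then invoke a ULLN over the compact $\Theta$. The only difference is presentational: the paper dispatches both conclusions in one line by citing \citet[Lemma~2.4]{Newey94:LS}, whereas you spell out the $U$-process machinery (Hoeffding decomposition, finite cover, stochastic equicontinuity) and give a separate DCT argument for continuity---which is precisely what that lemma packages.
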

\begin{proof}
    Recall that 
    \begin{eqnarray*}
        \MM^2(\theta) &=& \ep[h_\theta((X,Z),(X',Z'))] \\ 
        \MH_n^2(\theta) &=& \frac{1}{n(n-1)}\sum_{1\leq i\neq j \leq n} h_\theta((x_i,z_i),(x_j,z_j)),
    \end{eqnarray*}
    where $h_{\theta}((x,z),(x',z')) = \langle \bxi_{\theta}(x,z),\bxi_{\theta}(x',z') \rangle_{\hbspf^q} = \bpsi(z;\theta)^\top\bpsi(z';\theta)k(x,x')$.
    Then, it follows that
    \begin{align*}
        \left|h_\theta((x,z),(x',z'))\right| &= \left| \langle \bxi_{\theta}(x,z),\bxi_{\theta}(x'z') \rangle_{\hbspf^q} \right| \\
        &\leq \|\bxi_{\theta}(x,z)\|_{\hbspf^q}\cdot \|\bxi_{\theta}(x',z')\|_{\hbspf^q} && \\
        &= \sqrt{\bpsi(z;\theta)^\top\bpsi(z;\theta)k(x,x)}\sqrt{\bpsi(z';\theta)^\top\bpsi(z';\theta)k(x',x')} \\
        &= \|\bpsi(z;\theta)\|_2\|\bpsi(z';\theta)\|_2\sqrt{k(x,x)k(x',x')} \\
        &\leq F_{\bpsi}(z)F_{\bpsi}(z')\sqrt{k(x,x)k(x',x')}, &&
    \end{align*}
    where $F_{\bpsi}$ is an integrable and symmetric function.
    By Assumption \ref{asmp:a4}, $(x,x')\mapsto \sqrt{k(x,x)k(x',x')}$ is also an integrable function.
    Hence, $h_\theta$ is integrable.
    Since $\Theta$ is compact, it then follows from \citet[Lemma 2.4]{Newey94:LS} that $\sup_{\theta\in\Theta}|\MH_n^2(\theta)-\mathbb{M}^2(\theta)| \xrightarrow{\text{p}} 0 $ and $\theta \mapsto \MM^2(\theta)$ is continuous.
\end{proof}

Now, we are in the position to present the proof of Theorem \ref{thm:consistency}.

\begin{proof}[Proof of Theorem \ref{thm:consistency}]
    By Assumption \ref{asmp:a3} and Theorem \ref{thm:identifiability}, $\MM^2(\theta) = 0$ if and only if $\theta=\theta_0$.
    Thus $\MM^2(\theta)$ is uniquely minimized at $\theta_0$. 
    Since $\Theta$ is compact, $\MM^2(\theta)$ is continuous and $\MH_n^2(\theta)$ converges uniformly in probability to $\MM^2(\theta)$ by Lemma \ref{lem:mmr-consistency}. 
    Then, $\hat{\theta}_n \xrightarrow{\text{p}} \theta_0$ by \citet[Theorem 2.1]{Newey94:LS}.
\end{proof}

\subsection{Proof of Theorem \ref{thm:asymptotic}}

\begin{proof}
First, we need to check that $\sigma_h^2 \neq 0$ when $\theta \neq \theta_0$ and $\sigma_h^2 = 0$ when $\theta=\theta_0$. 
Then, the results follow directly from \citet[Sec. 5.5.1 and Sec. 5.5.2]{serfling1980approximation}.

Note that $\ep_{u'}[h_\theta(u,u')] = \ep_{u'}[\langle \bxi_\theta(u),\bxi_\theta(u')\rangle_{\hbspf^q}] = \langle \bxi_\theta(u),\ep_{u'}[\bxi_\theta(u')]\rangle_{\hbspf^q}=\langle \bxi_\theta(u) ,\mut_\theta\rangle_{\hbspf^q} = M_\theta \bxi_\theta(u)$.
When $\theta=\theta_0$, it follows that $\ep_{u'}[h_{\theta_0}(u,u')]=0$ by Lemma \ref{lem:optimal}, and hence $\sigma_h^2 = 0$.

Next, suppose that $\theta\neq \theta_0$.
Then, $\ep_{u'}[h_\theta(u,u')] = M_{\theta}\bxi_\theta(u) =: c(u)$. 
Since $\sigma_h^2 = \text{Var}_u[c(u)] = \ep_u[ (c(u) - \ep_{u'}[c(u')] )^2 ]$, $\sigma_h^2=0$ if and only if $c(u)$ is a constant function.
Note that we can write $c(u) = c(x,z) = \ep_{X'Z'}[\bpsi(Z';\theta)^\top\bpsi(z;\theta)k(x,X')]$. 
Therefore, by Assumptions \ref{asmp:a3} and \ref{asmp:a4}, $c(u)$ cannot be a constant function, implying that $\sigma_h^2 > 0$.
\end{proof}

\subsection{Proof of Theorem \ref{thm:kcm-icm}}

\begin{proof}
    Since the kernel $k(x,x')=\varphi(x-x')$ is a shift-invariant kernel on $\rr^d$, it follows from Theorem \ref{thm:bochner} that
    $$\varphi(x-x') = (2\pi)^{-d/2}\int_{\rr^d} e^{-i(x-x')^\top\omega} \dd\Lambda(\omega).$$
    Therefore, we can express $\MM^2(\theta)$ as
    \begin{align*}
        \MM^2(\theta) &= \ep[\bpsi(Z;\theta)^\top\bpsi(Z';\theta)k(X,X')] \\
            &= \ep[\bpsi(Z;\theta)^\top\bpsi(Z';\theta)\varphi(X-X')] \\
            &= (2\pi)^{-d/2}\ep\left[\bpsi(Z;\theta)^\top\bpsi(Z';\theta)\left(\int_{\rr^d} e^{-i(X-X')^\top\omega} \dd\Lambda(\omega)\right) \right] \\
            &= (2\pi)^{-d/2}\ep\left[\bpsi(Z;\theta)^\top\bpsi(Z';\theta)\left(\int_{\rr^d} e^{-i\omega^\top X} \cdot e^{i\omega^\top X'} \dd\Lambda(\omega)\right) \right] \\
            &= (2\pi)^{-d/2}\ep\left[\int_{\rr^d}\bpsi(Z;\theta)^\top\bpsi(Z';\theta) e^{-i\omega^\top X} e^{i\omega^\top X'} \dd\Lambda(\omega) \right] \\
            &= (2\pi)^{-d/2}\ep\left[\int_{\rr^d}\left[\bpsi(Z;\theta)e^{-i\omega^\top X} \right]^\top\left[\bpsi(Z';\theta)e^{i\omega^\top X'}\right]  \dd\Lambda(\omega) \right] \\
            &= (2\pi)^{-d/2}\int_{\rr^d} \ep\left[\bpsi(Z;\theta)e^{-i\omega^\top X} \right]^\top \ep\left[\bpsi(Z';\theta)e^{i\omega^\top X'}\right]  \dd\Lambda(\omega) \\
            &= (2\pi)^{-d/2}\int_{\rr^d} \left\|\ep\left[\bpsi(Z;\theta)\exp(i\omega^\top X) \right]\right\|_2^2 \dd\Lambda(\omega).
    \end{align*}
    This completes the proof.
\end{proof}










\end{document}